\definecolor{darkgreen}{rgb}{0,0.35,0}
\definecolor{darkblue}{rgb}{0,0,0.6}
\newtheorem{theorem}{Theorem}[section]
\newtheorem{claim}{Claim}[theorem]
\newtheorem{lemma}[theorem]{Lemma}
\newtheorem{conjecture}[theorem]{Conjecture}
\DeclareMathOperator{\GF}{GF}
\DeclareMathOperator{\Pf}{Pf}
\DeclareMathOperator{\mindeg}{mindeg}
\DeclareMathOperator{\Z}{\mathbb{Z}}
\DeclareMathOperator{\bR}{\mathbb{R}}
\DeclareMathOperator{\rank}{rank}
\DeclareMathOperator{\odd}{odd}
\DeclareMathOperator{\wt}{wt}
\renewcommand{\O}{\mathcal{O}}
\newcommand{\cC}{\mathcal{C}}
\newcommand{\cP}{\mathcal{P}}
\newcommand{\cM}{\mathcal{M}}
\newcommand{\del}{\setminus}
\newcommand{\con}{/}
\title[Computing girth and cogirth]{Computing girth and cogirth in perturbed graphic matroids}
\author{Jim Geelen}
\address{Department of Combinatorics and Optimization, University of Waterloo, Waterloo, Ontario, Canada} 
\email{jim.geelen@uwaterloo.ca}
\author{Rohan Kapadia}
\address{Department of Computer Science and Software Engineering, Concordia University, Montr\'eal, Qu\'ebec, Canada}
\email{rohan.f.kapadia@gmail.com}
\thanks{This research was partially supported by a grant from the Office of 
Naval Research [N00014-10-1-0851]. }
\subjclass[2010]{05B35, 94B05, 90C27}
\keywords{matroid theory, coding theory, distance, girth, randomized algorithm}
\date{April 28, 2015}
\begin{document}

\begin{abstract}
We give polynomial-time randomized algorithms for computing the
girth and the cogirth of binary matroids that are low-rank
perturbations of graphic matroids.
\end{abstract}

\maketitle

\section{Introduction}
The {\em girth} of a matroid is the length of its shortest circuit;
if the matroid has no circuit, the girth is defined to be $\infty$.
The following two theorems are our main results:

\begin{theorem} \label{thm:main1}
Let $t$ be a positive integer and let $\epsilon>0$.
There is a randomized algorithm that, given matrices $A,P\in \GF(2)^{r\times n}$ where 
$A$ is the incidence matrix of a graph and $\rank(P) \le t$, will,
with probability at least $1-\epsilon$, 
correctly compute the girth of $M(A+P)$ in time $\O(r^7  \log^2 r + nr)$.
\end{theorem}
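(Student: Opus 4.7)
The plan is to exploit a rank factorization $P = UV$ with $U \in \GF(2)^{r \times t}$ and $V \in \GF(2)^{t \times n}$. A circuit of $M(A+P)$ is a minimal nonempty $C \subseteq [n]$ with $(A+P)\mathbf{1}_C = 0$, i.e., with $A\mathbf{1}_C = Uw$ for $w := V\mathbf{1}_C \in \GF(2)^t$. Thus every circuit $C$ has a ``signature'' $w \in \GF(2)^t$ for which $C$ is a $T$-join in the graph $G$ represented by $A$ (with $T := \mathrm{supp}(Uw)$) and moreover satisfies $V\mathbf{1}_C = w$. Since $t$ is fixed, there are only $2^t$ possible signatures.

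The algorithm iterates over these $2^t$ signatures: for each $w$ it computes the minimum size of a nonempty $C$ satisfying the $T$-join constraint $A\mathbf{1}_C = Uw$ together with the $t$ linear mod-$2$ constraints $V\mathbf{1}_C = w$, and returns the overall minimum. Correctness follows because any overall-minimum such $C$ is necessarily a circuit: if $C$ properly contained a nonempty dependent $C'$, then $C'$ would satisfy the analogous constraints for $w' := V\mathbf{1}_{C'}$ and would have been found in the $w'$-pass, contradicting minimality. For a fixed $w$, I would reduce the subproblem to a standard minimum-weight $T$-join (solvable via minimum-weight perfect matching on the join graph with shortest-path edge weights) by absorbing the $t$ extra constraints into a $\GF(2)^t$-labelled version of $G$; this produces an auxiliary graph on $\O(r \cdot 2^t) = \O(r)$ vertices in which a standard $T$-join is sought.

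The main obstacle is achieving the target running time $\O(r^7 \log^2 r + nr)$. The $\O(nr)$ term covers reading the inputs and partitioning the $n$ columns of $A+P$ into the at most $2^t$ signature classes, after which the problem involves only $\O(r)$ effective elements. The remaining budget must absorb the matching subroutine; a natural randomized implementation is a Pfaffian/isolation-lemma scheme in the style of Mulmuley--Vazirani--Vazirani (suggested by the $\Pf$ macro declared in the preamble), with the $\log^2 r$ arising from arithmetic over a $\mathrm{poly}(r)$-sized extension field used to isolate a minimum-weight solution with high probability, and $\O(\log(1/\epsilon))$-fold repetition suppressing the failure probability below $\epsilon$. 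The delicate steps are verifying correctness of the group-labelled $T$-join gadget (ensuring the parity constraints are exactly captured and that every circuit is detected) and tightening the arithmetic bookkeeping to hit the stated exponent.
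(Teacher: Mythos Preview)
Your high-level architecture matches the paper exactly: factor $P=UV$, iterate over the $2^t$ signatures $w\in\GF(2)^t$, and for each $w$ seek a minimum nonempty $T$-join in $G$ (with $T=\mathrm{supp}(Uw)$) subject to the $t$ parity constraints $V\mathbf{1}_C=w$. The paper calls this the $t$-Dimensional Parity Join Problem, and it too finishes with an MVV-style Pfaffian computation.

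The gap is your claim that the parity constraints can be ``absorbed into a $\GF(2)^t$-labelled version of $G$'' so that a \emph{standard} $T'$-join in the cover $G^\gamma$ solves the problem. The cover handles parity correctly for a single walk --- a parity-$\beta$ $(u,v)$-walk lifts to a $((u,0),(v,\beta))$-walk --- but a $T$-join decomposes into several paths, and the constraint $\gamma(J)=\alpha$ is on the \emph{sum} of their parities. No fixed set of lifts $T'\subseteq V(G)\times\GF(2)^t$ captures all admissible distributions of that sum. Concretely, let $G$ consist of the edges $uv$, $wx$ (each of parity $1$) together with long parity-$0$ paths from $u$ to $w$ and from $v$ to $x$, with $T=\{u,v,w,x\}$ and $\alpha=0$. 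The minimum parity-$0$ $T$-join is $\{uv,wx\}$ of size $2$, but with the obvious choice $T'=\{(u,0),(v,0),(w,0),(x,0)\}$ the minimum $T'$-join in $G^\gamma$ comes from the long paths and is much larger; the $T'$ that finds $\{uv,wx\}$ is $\{(u,0),(v,1),(w,0),(x,1)\}$, which you cannot know in advance. Iterating over all such $T'$ costs $(2^t)^{|T|}$, and $|T|$ can be $\Theta(r)$. Note also that if your reduction worked it would yield a \emph{deterministic} algorithm (standard $T$-join reduces to ordinary min-weight perfect matching), whereas the paper explicitly flags even the $t=1$ case (``weighted even perfect matching'') as open deterministically.

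What the paper actually does is carry the parity constraint through to the matching step. It uses the cover $G^\gamma$ only to compute, for each pair $\{u,v\}\subseteq T$ and each $\beta\in\GF(2)^t$, the minimum size $\tilde w_{\{u,v\}}(\beta)$ of a parity-$\beta$ $\{u,v\}$-join; then it seeks a minimum-weight perfect matching on vertex set $T$ whose edge-labels in $\GF(2)^t$ sum to $\alpha$. That residual \emph{parity matching} problem is solved by a Pfaffian over the quotient ring $\Z[y_1,\dots,y_t,z]/\langle y_i^2-1\rangle$: the $y$-variables encode parity multiplicatively, $z$ tracks weight, a random Schwartz--Zippel substitution for the Tutte-matrix edge variables makes the relevant coefficient nonzero with high probability, and the answer is read off from the $y^\alpha$-coefficient. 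So your Pfaffian instinct is correct, but the randomization is needed precisely because the matching still carries a parity constraint that the cover cannot remove.
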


The {\em cogirth} of a matroid is the girth of its dual.

\begin{theorem} \label{thm:main2}
Let $t$ be a positive integer and let $\epsilon>0$.
There is a randomized algorithm that, given matrices $A,P\in \GF(2)^{r\times n}$ where 
$A$ is the incidence matrix of a graph and $\rank(P) \le t$, will,
with probability at least $1-\epsilon$,
correctly compute the cogirth of $M(A+P)$ in time $\O(r^5 n)$.
\end{theorem}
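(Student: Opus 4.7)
The plan is to reduce the cogirth of $M(A+P)$ to a polynomial number of minimum-cut problems on graphs derived from the graph $G$ whose incidence matrix is $A$, using the identity that the cogirth of a binary matroid $M(N)$ equals the minimum Hamming weight of a nonzero vector in the row space of $N$.

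First, I would factor $P = QU$ with $Q \in \GF(2)^{r \times t}$ and $U \in \GF(2)^{t \times n}$ in time $O(rn)$ by Gaussian elimination. Every vector in the row space of $A+P$ is then of the form $x^\top(A+P) = \chi_{\delta(S)} + b^\top U$, where $S = \operatorname{supp}(x) \subseteq V(G)$ and $b = Q^\top \chi_S \in \GF(2)^t$. Since $b$ takes at most $2^t = O(1)$ values, the outer loop of the algorithm iterates over all choices of $b$. For each $b$, let $F_b \subseteq E(G)$ be the support of $b^\top U$; the subproblem becomes: minimize $|\chi_{\delta(S)} \bigtriangleup F_b|$ over vertex subsets $S$ satisfying the $t$ linear conditions $Q^\top \chi_S = b$, excluding the case where the resulting vector is zero.

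Each subproblem is a constrained min-cut problem with a symmetric-difference objective against $F_b$ and $t$ linear constraints on $\chi_S$. I would attack it by enumerating, on the one hand, $O(r^2)$ pairs $(s_1, s_2)$ of vertices forced to opposite sides of the cut (to ensure nonzeroness and set up a standard $s$-$t$ flow), and on the other hand, $O(r^2)$ further pivot vertices whose sides pin down representatives of the at most $2^t$ $Q$-equivalence classes of $V(G)$ and so encode the linear constraints. Within each of the $O(r^4)$ resulting branches the remaining task reduces to a standard $s_1$-$s_2$ minimum cut on a minor of $G$ in which the $F_b$-dependent part of the objective is absorbed by edge-capacity adjustments; one max-flow computation per branch in time $O(rn)$ then suffices. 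The overall running time is $O(1) \cdot O(r^4) \cdot O(rn) = O(r^5 n)$.

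The main obstacle is converting the symmetric-difference objective into a genuine minimum cut problem with non-negative capacities, since unrestricted min-cut with $\pm 1$ edge weights is NP-hard (it contains max-cut). I expect the resolution to exploit that $F_b$ lies in an $O(1)$-dimensional code, so that after the outer enumeration the $F_b$-contribution is either constant across all feasible cuts within a branch or absorbable by a bounded number of auxiliary edges, rather than producing genuinely signed capacities. Randomization plausibly enters only in preprocessing---for instance through a randomized Gomory--Hu tree construction or a random identification used to avoid degenerate configurations---rather than in the core combinatorial optimization.
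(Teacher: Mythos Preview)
Your reduction is set up correctly and in fact coincides with the paper's: factoring $P=QU$, looping over $b\in\GF(2)^t$, and asking to minimise $|\delta_G(S)\Delta F_b|$ over $S\subseteq V(G)$ with $Q^\top\chi_S=b$ is exactly the paper's \emph{$t$-Dimensional Even-Cut Problem} (Lemma~\ref{formulation1} together with Lemma~\ref{lem:reduces}).

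The gap is in how you propose to solve each subproblem.  Your plan is to reduce to ordinary $s$--$t$ min-cut after $O(r^4)$ branchings, but neither branching does what you need.  Pinning the sides of one ``representative'' per $Q$-equivalence class does not encode a \emph{parity} constraint: the condition $Q^\top\chi_S=b$ depends on the parity of $|S\cap C|$ for each class $C$, and fixing whether a single vertex of $C$ lies in $S$ says nothing about that parity.  As for the objective, $F_b$ is not special beyond lying in a $t$-dimensional code; for $t=1$ and $b=1$ the set $F_1$ can be an arbitrary subset of $E(G)$, and unrestricted minimisation of $|\delta(S)\Delta F_1|$ contains max-cut.  Your hoped-for absorption of $F_b$ into nonnegative capacities, or its constancy across a branch, is exactly the missing idea---and it is genuinely missing: the paper explicitly flags deterministic even-cut (already for two parity sets) and cogirth of even-cycle matroids as open, which is precisely the $t\le 2$ case of your subproblem.

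What the paper actually does is abandon $s$--$t$ cuts altogether and run a Karger-style random contraction.  The key lemma (Lemma~\ref{lem:claim1v2}) shows that, because there are only $2^t$ equivalence classes, $G$ has $\Omega(nk)$ non-loop edges, so a random edge avoids a fixed optimal cocycle with probability at least $1-O(1/n)$; contracting down to $2^t+4$ vertices therefore preserves optimality with probability $\Omega(n^{-4})$, and $O(n^4)$ repetitions give the stated guarantee.  The randomisation is thus essential to the argument, not a preprocessing convenience: it sidesteps both the parity constraints and the symmetric-difference objective simultaneously, which your $s$--$t$ approach cannot.
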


\subsection*{Cycles and cocycles}
Let $A\in\GF(2)^{r\times E}$. A {\em cycle} of $M(A)$ is a subset $C$
of $E$ such that the columns of $A$ indexed by $C$ sum to zero.
Thus $C$ is a cycle if and only if it is a disjoint union of circuits.
The girth of $M(A)$ is the size of the smallest non-empty cycle;
this turns out, for the purpose of this paper, to be the 
most convenient way to view girth.

A {\em cocycle} of $M(A)$ is a set whose characteristic vector
is in the row-span of $A$. Equivalently, $C^*$ is a cocycle of
$M(A)$ if and only if it is a cycle of $M(A)^*$.
So the cogirth of $M(A)$ is the size of the smallest
non-empty cocycle. Again, for this paper, this is
the most convenient way to view cogirth.

\subsection*{Motivation}
The problem of computing the girth of a binary matroid has received a lot of attention
due to its well-known connection with coding theory. If $A$ is the parity-check matrix of
a binary linear code $\cC$, then the distance of $\cC$ is equal to the 
girth of the binary matroid $M(A)$. In a landmark paper, Vardy \cite{Vardy} proved that 
the problem of computing girth in binary matroids is $\mathcal{NP}$-hard.
On the other hand, there are significant classes of binary matroids in which
one can efficiently compute girth; for example, the class of graphic matroids and the class
of cographic matroids.  Geelen, Gerards, and Whittle \cite{GeelenGerardsWhittle} posed the following
conjecture.

\begin{conjecture}\label{conj:girth}
For any proper minor-closed class $\cM$ of binary matroids, there is a polynomial-time
algorithm for computing the girth of matroids in $\cM$.
\end{conjecture}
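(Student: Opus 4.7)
The natural route to Conjecture~\ref{conj:girth} goes through the binary matroid structure theorem of Geelen, Gerards, and Whittle, which one expects to say that every matroid in a proper minor-closed class $\cM$ decomposes, along bounded-order separations, into pieces that are either of bounded branch-width or of the form $M(A+P)$ with $A$ graphic or cographic and $\rank(P)\le t$ for some constant $t=t(\cM)$. The plan is to reduce the girth computation to each piece and then reassemble the global answer by dynamic programming on the decomposition tree.

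Concretely, I would proceed in three steps. First, invoke an algorithmic version of the structure theorem to produce such a decomposition of the input matroid. Second, for each piece compute a table indexed by compatible ``boundary configurations''---which boundary elements a cycle uses, together with, in the perturbed case, which coordinates of the perturbation are active---recording the weight of the shortest partial cycle realizing each configuration. For bounded branch-width pieces this is standard dynamic programming; for perturbed-graphic pieces Theorem~\ref{thm:main1} is the key ingredient; for perturbed-cographic pieces one applies Theorem~\ref{thm:main2} to the dual. Third, combine the tables up the decomposition tree by, at each internal node, minimizing over compatible concatenations of boundary configurations and reading the final answer at the root.

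The principal obstacle is algorithmic access to the structure theorem: at present it is a qualitative statement, and producing the required decomposition from a given binary matroid in polynomial time is a substantial open problem, analogous to an algorithmic graph-minor structure theorem. A secondary, more technical, difficulty specific to the girth question is that a shortest cycle can interact with a perturbation $P$ in complicated ways across a separation, so the internal machinery underlying Theorems~\ref{thm:main1} and~\ref{thm:main2} must be extended from ``compute a single number'' to ``compute a parametric table'' that also tracks how the cycle uses boundary elements and perturbation coordinates. In short, Theorems~\ref{thm:main1} and~\ref{thm:main2} provide the base case on which any eventual proof of the conjecture will rest, but the conjecture as a whole remains open pending both the algorithmic structure theorem and the parametric refinement just described.
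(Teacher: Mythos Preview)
The statement is a \emph{conjecture}, and the paper does not prove it; it is explicitly left open, with Theorems~\ref{thm:main1} and~\ref{thm:main2} offered as supporting evidence. Your proposal correctly recognizes this and does not claim a proof either: you sketch a plausible strategy and then identify the obstructions that keep the conjecture open.

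Your outline is broadly consonant with how the paper frames the situation. The paper invokes the announced structure theorem (Theorem~\ref{thm:structure}) to argue that, for vertically $k$-connected matroids in $\cM$, the girth problem reduces to the perturbed graphic/cographic setting handled by Theorems~\ref{thm:main1} and~\ref{thm:main2}; it then names two shortcomings: (1) the reduction only applies to sufficiently connected matroids, and (2) the algorithms are randomized rather than deterministic. Your first obstacle---the lack of an algorithmic structure/decomposition theorem that would handle low-connectivity separations---is essentially an elaboration of shortcoming~(1), and your ``parametric table'' refinement is a reasonable articulation of what would be needed to glue pieces across separations. One point you do not flag, but the paper does, is that even granting everything else the resulting algorithm would still be randomized, whereas the conjecture as stated asks for a polynomial-time algorithm; derandomizing the base cases (the even-cut and parity-matching subroutines) is itself an open problem discussed in the introduction.
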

 
Here a minor-closed class of binary matroids is called {\em proper} if it does not, up to isomorphism,
contain all binary matroids.
In the same paper, Geelen, Gerards, and Whittle announced (without proof) the following result:
\begin{theorem}\label{thm:structure}
For each proper minor-closed class $\cM$ of binary matroids, there exist non-negative integers
$k$ and $t$ such that, for each vertically $k$-connected matroid $M\in \cM$,  there exist 
matrices $A,P\in\GF(2)^{r\times n}$ such that $A$ is the incidence matrix of a graph, 
$\rank(P)\le t$, and either $M=M(A+P)$ or $M=M(A+P)^*$.
\end{theorem}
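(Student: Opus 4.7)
\medskip
\noindent\textbf{Proof proposal for Theorem \ref{thm:structure}.}
The plan is to derive this from the binary matroid structure theorem of Geelen, Gerards, and Whittle, which is the deep output of their matroid minors project specialized to $\GF(2)$. That theorem asserts, roughly, that for each proper minor-closed class $\cM$ of binary matroids there exist constants $k$ and $w$ such that every vertically $k$-connected member of $\cM$ either has branch-width at most $w$, or admits, up to duality, a bounded-rank perturbation to a graphic matroid. My task is to massage this conclusion into the explicit matrix form $M=M(A+P)$ or $M=M(A+P)^*$.

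First, I would dispose of the small branch-width case by noting that matroids of branch-width at most $w$ have $\Z$-representations of rank at most some $w'=w'(w)$, so on at most $w'$ elements we can take $A$ to be a trivial graphic incidence matrix (say of a path) and absorb the entire representation into $P$; thus the statement holds trivially with $t$ enlarged by $w'$. This reduces everything to the main case.

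In the main case, the structure theorem furnishes an auxiliary graphic matroid $M_0=M(A_0)$ on the same ground set $E$ such that some $\GF(2)$-representations $B$ of $M$ and $A_0$ of $M_0$ satisfy $\rank(B-A_0)\le t$. Writing $P:=B-A_0$ and $A:=A_0$, one gets $M=M(A+P)$. The only delicate point is row-dimension: the structure theorem gives representations of rank $r(M)$, whereas an incidence matrix has $r$ rows indexing vertices with column sums equal to zero (each column has two ones). I would reconcile this by row-reducing the matrix pair $(A_0\mid B)$ together via a single invertible transformation, which preserves the row span of each and hence preserves both $M(A_0)$ and $M(B)$, and by possibly appending one extra row to realize the full incidence matrix of a graph rather than its rank-$r(M_0)$ reduction. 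This may cost one extra row but does not change either matroid. The cographic case is then handled by taking duals: if $M$ is a low-rank perturbation of a cographic matroid $M(A')^*$, then $M^*$ is a low-rank perturbation of $M(A')$, so the previous analysis applied to $M^*$ produces the representation $M=M(A+P)^*$ required.

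The main obstacle, by a wide margin, is the invocation of the structure theorem itself: its proof is the culmination of the matroid minors project and is very long and technical. By contrast, the passage from an abstract bounded-rank perturbation to the explicit matrix form above is essentially a bookkeeping exercise with row operations and an observation about the normal form of graphic incidence matrices; only the correct tracking of row-dimensions and the symmetric treatment of $M$ versus $M^*$ requires any care.
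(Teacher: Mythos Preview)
The paper does not prove Theorem~\ref{thm:structure} at all: it is quoted as a result \emph{announced without proof} by Geelen, Gerards, and Whittle in~\cite{GeelenGerardsWhittle}. So there is nothing in the paper to compare your argument against; the authors treat it as a black box.

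That said, your sketch contains a genuine error. You claim that ``matroids of branch-width at most $w$ have $\Z$-representations of rank at most some $w'=w'(w)$''. This is false: the cycle matroid of a path on $n$ vertices has branch-width at most $2$ and rank $n-1$, and a direct sum of $n$ copies of the Fano plane has bounded branch-width and rank $3n$. Bounded branch-width by itself places no bound on rank, so your plan to ``absorb the entire representation into $P$'' does not work. What is true is that a vertically $k$-connected matroid of branch-width less than $k$ must have small rank (since every displayed separation violates the vertical-connectivity condition on one side), and \emph{that} is the observation you would need here --- but it uses the connectivity hypothesis essentially, and you did not invoke it.

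More broadly, the dichotomy you describe (bounded branch-width versus perturbation of graphic) is not how the result in~\cite{GeelenGerardsWhittle} is stated. The announced theorem is formulated directly for vertically $k$-connected members of $\cM$, with no separate branch-width case; the branch-width considerations are absorbed into the choice of $k$. Your ``main case'' discussion (aligning row-dimensions, handling the dual) is reasonable bookkeeping, but since the substance of the theorem \emph{is} the structure result itself, and that result is not proved here, there is really nothing to reduce to: Theorem~\ref{thm:structure} is the announcement, not a corollary of it.
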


In light of this result, our Theorems~\ref{thm:main1} and~\ref{thm:main2}
give significant support to Conjecture~\ref{conj:girth}; their main shortcomings being:
(1) they only apply to sufficiently connected matroids in a minor-closed class, and
(2) they only give {\em randomized} algorithms.

\subsection*{Related work}
Barahona and Conforti \cite{BarahonaConforti} studied both the
girth and cogirth problems for the class of ``even-cycle matroids".
Let $M_1$ and $M_2$ be binary matroids on the same ground set.
We call $M_1$ a {\em rank-$t$ perturbation} of $M_2$ if $M_1$ 
has a representation $A$ and $M_2$ has a representation $B$
such that $B - A$ has rank $t$.
We call $M_1$ an {\em even-cycle matroid} if it is a rank-$1$
perturbation of a graphic matroid $M_2$ and $r(M_1)=r(M_2)+1$.

Barahona and Conforti 
gave an efficient deterministic algorithm for computing 
girth of even-cycle matroids. They also noted that the problem
of computing cogirth for these matroids is closely related to
the max-cut problem; however, they neither found an efficient
algorithm for the cogirth problem nor proved that it is NP-hard.

\subsection*{This paper}
Perturbations of graphic matroids can be encoded as labelled
graphs; see Lemma~\ref{lem:decomposition}.
Using this result we reduce Theorem~\ref{thm:main1}
to the $t$-Dimensional Parity Join Problem, discussed
in Section~6, which we reduce, in turn, to the
$t$-Dimensional Parity Perfect Matching Problem,
discussed in Section~5. We solve the $t$-Dimensional
Parity Perfect Matching Problem  using a variant of the
Mulmuly, Vazarani, and Vazarani algorithm~\cite{MVV}
for the exact matching problem.

We then employ Lemma~\ref{lem:decomposition} again
to reduce Theorem~\ref{thm:main2} to the $t$-Dimensional
Even Cut Problem, which we solve, in Section~3,
by a variant of Karger's algorithm~\cite{Karger} for the 
global minimum cut problem.

It is curious that, while our two algorithms are quite different from each other, they  both require randomization.  Finding efficient {\em deterministic}
algorithms seems to be quite difficult; below we discuss two particular bottlenecks.

\subsection*{Even-cut problem}
Let $k$ be a fixed non-negative integer.
An instance of the {\em $k$-set even-cut problem} consists of a triple $(G;T_1,\ldots,T_k)$
where $T_1,\ldots,T_k$ are even-cardinality subsets of $V(G)$.
The problem is, among all non-empty proper subsets $X$ of $V(G)$ with
$|T_1\cap X|,\ldots, |T_k\cap X|$ all even, to minimize
the size of the cut $\delta_G(X)$. Here $\delta_G(X)$ denotes the set of
all edges of $G$ that have one end in $X$ and one end in $V(G)-X$.

We give a polynomial-time {\em randomized} algorithm for the $k$-set even-cut problem; see Section~\ref{sec:evencuts}.
Conforti and Rao \cite{ConfortiRao} found an efficient {\em deterministic}
algorithm for the one-set even-cut problem, but we have not been able to
find a deterministic solution for the two-set version.

\subsection*{A parity matching problem}
An instance of the {\em weighted even perfect matching problem}  consists of 
a triple $(G,\Sigma,w)$ where $G$ is a graph, $\Sigma\subseteq E(G)$, and
$w:E(G)\rightarrow\{0,1\}$ is an edge weighting.
The problem is, among all perfect matchings $M$ of $G$ with $|M\cap \Sigma|$ even,
to minimize $\sum (w(e)\, : \, e\in M)$.

Anyone who is familiar with the Mulmuly, Vazarani, and Vazarani~\cite{MVV} matrix formulation of the
exact matching problem (see~\cite{MVV}) will recognize that the 
weighted even perfect matching problem
can be solved by an efficient randomized algorithm.
However, it is not even clear how one might solve the 
feasibility problem deterministically.

The relationship between the two problems above 
and the problem of computing girth will become clear.

\section{Even cuts} \label{sec:evencuts}

In this section we give an efficient randomized algorithm for the $t$-Set Even-Cut Problem.
This section is peripheral to the rest of paper and may freely be skipped by the reader.
We include the material since we believe that this problem is of independent interest.

\medskip

{\it
\noindent
{\bf The $t$-Set Even-Cut Problem}\\[1mm]
{\sc Instance:}
A tuple $(G;T_1,\ldots,T_t)$
where $G$ is a graph and $T_1,\ldots,T_t$ are 
even-cardinality subsets of $V(G)$.\\
{\sc Problem:}
Among all non-empty proper subsets $X$ of $V(G)$ with
$|T_1\cap X|,\ldots,|T_t\cap X|$ all even, 
minimize the size of the cut $\delta_G(X)$.
}
\medskip

Let $(G; T_1,\ldots,T_t)$ be an instance of the $t$-Set Even-Cut Problem.
A {\em $(T_1,\ldots,T_t)$-even cut} is a cut $\delta(X)$ such that $\emptyset\subset X\subset V(G)$ and each of
$|T_1\cap X|,\ldots,|T_t\cap X|$ is even.
Note that $(T_1,\ldots,T_t)$-even cuts do not always exist; for example,
the problem is infeasible when $|V(G)|=2$ and $T_1=V(G)$.
However, it is easy to check feasibility.
To see this, consider the matrix $A \in\GF(2)^{t\times V(G)}$ where the
$i$-th row of $A$ is the characteristic vector of the set $T_i$.
For $X\subseteq V(G)$, the cut $\delta(X)$ is $(T_1,\ldots,T_k)$-even if and only
if $X$ is a cycle of $M(A)$ and $\emptyset\subset X\subset V(G)$.
Thus $(G;T_1,\ldots,T_t)$ is feasible unless 
$V(G)$ is a circuit of $M(A)$ (note that, since $|T_1|,\ldots,|T_t|$ are even $V(G)$ is itself a 
cycle of $M(A)$). In particular, if $|V(G)|\ge t+2$, then the instance is feasible.

The following is a randomized algorithm for solving the $t$-Set Even-Cut Problem.
The algorithm, as well as the analysis that follows, is based on a randomized algorithm for finding minimum
cuts due to Karger~\cite{Karger}. First we need some notation.

Let $e$ be an edge with ends $x$ and $y$ in $G$, and let $G\con e$ denote the graph obtained
by contracting $e$ to a new vertex $z$.  For each $i\in \{1,\ldots,t\}$, there is a unique even-cardinality 
subset $T'_i$ of $G\con e$ such that $T_i-\{x,y\} = T'_i-\{z\}$. We denote the tuple $(G\con e; T'_1,\ldots,T'_t)$ by 
$(G;T_1,\ldots,T_t)\con e$. 

\medskip
{\it
\noindent
{\bf Random Contraction Algorithm}\\[1mm]
{\sc Input:} A feasible instance $(G;T_1,\ldots,T_t)$ of the $t$-Set Even-Cut Problem.\\[1mm]
{\sc Step 1.} Delete the loops of $G$.\\
{\sc Step 2.} If $|V(G)|\le 2^t + 4$, find a minimum cardinality $(T_1,\ldots,T_t)$-even cut $C$ by 
 exhaustive search and then stop and return $C$.\\
{\sc Step 3.} If $G$ has no edge stop and return $\emptyset$.\\
{\sc Step 4.} Choose an edge $e$ of $G$ uniformly at random and replace the instance
 $(G;T_1,\ldots,T_t)$ with $(G;T_1,\ldots,T_t)\con e$. Then repeat from Step~1.
}
\medskip

For an $n$-vertex $m$-edge graph, the Random Contraction Algorithm takes $\O(nm)$ time (actually Karger's algorithm can be executed even faster than this; see~\cite{KargerStein}).

\begin{lemma} \label{lem:claim1}
 Let $(G; T_1, \ldots, T_t)$ be a feasible instance of the $t$-Set Even-Cut Problem and let $k$ be the minimum size of
 a $(T_1,\ldots,T_t)$-even cut in $G$.
 Then $(|V(G)| - 2^{t}) k \leq 4 |E(G)|$.
\end{lemma}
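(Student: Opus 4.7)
The plan is to imitate Karger's classical bound $|V(G)|\,k\le 2|E(G)|$ for the ordinary global min cut. Recall Karger's argument: for every vertex $v$, the cut $\delta(\{v\})$ has size $\deg(v)$ and size at least $k$, so $2|E(G)|=\sum_v \deg(v)\ge |V(G)|\,k$. The catch in our setting is that a singleton $\{v\}$ need not satisfy the parity constraints, so a single vertex does not always yield a $(T_1,\ldots,T_t)$-even cut. The fix is to replace singletons by carefully chosen pairs.

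Following the discussion just before the lemma, let $A\in\GF(2)^{t\times V(G)}$ have the characteristic vectors of $T_1,\ldots,T_t$ as its rows; then $X\subseteq V(G)$ gives a $(T_1,\ldots,T_t)$-even cut if and only if $A\chi_X=0$. Call the column of $A$ indexed by $v$ the \emph{signature} of $v$; there are at most $2^t$ distinct signatures. Whenever $u,v$ are two distinct vertices with the same signature, $A(\chi_u+\chi_v)=0$, so $\{u,v\}$ is a cycle of $M(A)$, and (provided $\{u,v\}$ is a proper subset of $V(G)$) the cut $\delta(\{u,v\})$ is $(T_1,\ldots,T_t)$-even.

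First I would dispose of the small case $|V(G)|\le 2^t+4$: since trivially $k\le |E(G)|$, we have $(|V(G)|-2^t)k\le 4k\le 4|E(G)|$. So assume $|V(G)|\ge 2^t+5$; in particular $|V(G)|\ge 3$, so every doubleton is a proper nonempty subset of $V(G)$. Partition $V(G)$ into its signature classes $V_\sigma$ and, inside each $V_\sigma$, choose a maximum matching of vertex pairs; this produces a collection of vertex-disjoint pairs $P_1,\ldots,P_m$ with
\[
  m \;\ge\; \tfrac{1}{2}\sum_\sigma \bigl(|V_\sigma|-1\bigr) \;\ge\; \tfrac{1}{2}\bigl(|V(G)|-2^t\bigr).
\]
Each $P_i=\{u_i,v_i\}$ gives a $(T_1,\ldots,T_t)$-even cut, so $|\delta(P_i)|\ge k$; on the other hand $|\delta(\{u_i,v_i\})|\le \deg(u_i)+\deg(v_i)$.

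The final step is to sum these inequalities over the disjoint pairs and use the handshaking lemma:
\[
  \tfrac{1}{2}\bigl(|V(G)|-2^t\bigr)\,k \;\le\; m\,k \;\le\; \sum_{i=1}^{m}\bigl(\deg(u_i)+\deg(v_i)\bigr) \;\le\; \sum_{v\in V(G)}\deg(v) \;=\; 2|E(G)|,
\]
which rearranges to the claimed bound. The only real subtlety is handling small $|V(G)|$ so that the chosen pairs are genuinely proper subsets of $V(G)$, and ensuring the pigeonhole count $m\ge (|V(G)|-2^t)/2$ is valid; both are easy once the signature partition is set up, so I do not anticipate any serious obstacle.
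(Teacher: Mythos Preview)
Your proof is correct and follows essentially the same idea as the paper: partition $V(G)$ into signature classes, form two-element subsets within each class to obtain $(T_1,\ldots,T_t)$-even cuts, and bound their total size by a degree sum. The only cosmetic difference is that the paper takes consecutive (overlapping) pairs $\{v_i,v_{i+1}\}$ along an ordering of each class---giving $|V(G)|-2^t$ pairs with each edge lying in at most four of the cuts---whereas you take vertex-disjoint pairs, halving the number of pairs but also halving the degree count; the arithmetic comes out identically, and your explicit handling of the small case makes the ``proper subset'' requirement airtight.
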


\begin{proof}
Define an equivalence relation $(V(G),\sim)$ where $u\sim v$ if and only if, for each $i\in\{1,\ldots,t\}$,
the set $T_i$ contains either none or both of $u$ and $v$. Now let $\Pi$ be the partition of 
$V(G)$ into equivalence classes; note that $|\Pi|\le 2^t$.
 For each set $P\in \Pi$, fix an ordering $(v_1,\ldots,v_{|P|})$ of the elements of
 $P$  and let $X_P = \{\{v_i,v_{i+1}\}\, : \, 1\le i<|P|\}$.
 Note that, for each $X\in X_P$, the cut $\delta(X)$ is 
 $(T_1,\ldots , T_t)$-even.
 Moreover, each vertex of $P$ appears in at most two of the sets in $X_P$, so each edge of $G$ appears in at most four of cuts
 $(\delta(X)\, : \, P\in\Pi,\, X\in X_P)$.
 Therefore, $4 |E(G)| \geq \sum(|\delta(X)|\, : \, P\in\Pi,\, X\in X_P) \geq (|V(G)| - 2^t)k$.
\end{proof}

\begin{lemma} \label{lem:paritycuts1}
Let $(G; T_1, \ldots, T_t)$ be a feasible instance of the $t$-Set Even-Cut Problem and let $k$ be the
minimum size of a $(T_1,\ldots,T_t)$-even cut. Then the Random Contraction Algorithm returns a cut of size
$k$ with probability at least $\frac{24}{|V(G)|^4}$.
 \end{lemma}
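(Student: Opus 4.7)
The plan is to adapt Karger's survival-probability analysis for the classical min-cut algorithm, using Lemma~\ref{lem:claim1} in place of the standard bound $|E|\ge |V|k/2$. Fix a minimum $(T_1,\ldots,T_t)$-even cut $\delta(X^*)$ of size $k$. The first step is to observe that contracting any edge $e=\{x,y\}$ with both ends on the same side of $X^*$ preserves $X^*$ (after merging $x,y$ into $z$ on the appropriate side) as a $(T'_1,\ldots,T'_t)$-even cut of the same size $k$ in the contracted instance. This is a quick case check: when $|T_i\cap\{x,y\}|$ is even, $z\notin T'_i$ and $|T'_i\cap X^{*\prime}|$ differs from $|T_i\cap X^*|$ by $0$ or $2$; when $|T_i\cap\{x,y\}|$ is odd, $z\in T'_i$ and again $|T'_i\cap X^{*\prime}|=|T_i\cap X^*|$. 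Hence, provided the random contraction phase never selects an edge of $\delta(X^*)$, the set $X^*$ survives to the exhaustive-search phase, which then returns a cut of size $\le k$ and hence exactly $k$.

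Next I would estimate, at each random contraction step, the probability that an edge of $\delta(X^*)$ is chosen. Write $n=|V(G)|$ for the current vertex count; since $X^*$ remains a feasible even cut at every stage, Lemma~\ref{lem:claim1} applied to the current instance gives $|E(G)|\ge (n-2^t)k/4$, so the probability that the chosen edge lies in $\delta(X^*)$ is at most $k/|E(G)|\le 4/(n-2^t)$.

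Telescoping is then routine. The random contraction phase runs only while $|V(G)|\ge 2^t+5$, reducing $|V(G)|$ by one each iteration down to $2^t+4$. Writing $n_0=|V(G)|$ for the initial vertex count, the probability that no edge of $\delta(X^*)$ is ever contracted is at least
\[
\prod_{n=2^t+5}^{n_0}\frac{n-2^t-4}{n-2^t}=\frac{24}{(n_0-2^t)(n_0-2^t-1)(n_0-2^t-2)(n_0-2^t-3)}\ge \frac{24}{n_0^4},
\]
using the substitution $m=n-2^t$ to collapse the product to $4!/[N(N-1)(N-2)(N-3)]$ with $N=n_0-2^t$. The trivial cases $n_0\le 2^t+4$ (algorithm succeeds with probability $1\ge 24/n_0^4$) and $k=0$ (no contraction can destroy the optimal cut) are immediate.

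The main subtlety, and the step I would write out most carefully, is the parity bookkeeping: one must verify that when an edge with both ends in $X^*$ (or both outside $X^*$) is contracted, the resulting set $X^{*\prime}$ is still a $(T'_1,\ldots,T'_t)$-even cut of size $k$, and in particular that $X^{*\prime}$ remains a non-empty proper subset throughout the random contraction phase (which follows from $|V(G)|\ge 2^t+5\ge 2$ being maintained until exhaustive search). Everything else is a mechanical translation of Karger's argument.
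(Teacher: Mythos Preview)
Your proposal is correct and follows essentially the same route as the paper's proof: fix a minimum even cut $C^*$, use Lemma~\ref{lem:claim1} at each stage to bound the probability that the random edge lies in $C^*$ by $4/(n-2^t)$, and telescope to get $24/((n_0-2^t)(n_0-2^t-1)(n_0-2^t-2)(n_0-2^t-3))>24/n_0^4$. Your write-up is in fact more detailed than the paper's, which asserts without justification that ``if $e\notin C^*$, then $C^*$ remains optimal for the instance $(G;T_1,\ldots,T_t)/e$''; your parity bookkeeping supplies that missing verification. One small quibble: the reason $X^{*\prime}$ stays a non-empty proper subset is not that $|V(G)|\ge 2$, but that contracting an edge internal to one side of the bipartition cannot empty either side.
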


\begin{proof}
Let $C^*$ be a minimum cardinality $(T_1,\ldots,T_t)$-even cut and let $n=|V(G)|$.
Consider an edge $e$ chosen in Step 4. Note that, if  $e\not\in C^*$, then $C^*$ remains optimal
for the instance $(G;T_1,\ldots,T_t)\con e$.
By Lemma~\ref{lem:claim1},
$$
 {\bf Prob} [e \notin C^*] = 1 - \frac{k}{|E(G)|} \geq \frac{n - 2^{t} - 4}{n - 2^{t}}.
$$

We repeat Step~4 a total of $n- 2^t - 4$ times on successively smaller graphs; the probability that 
we never choose an edge of $C^*$ is at least:
\begin{eqnarray*}
\frac{n - 2^{t} - 4}{n - 2^{t}} \cdot \frac{n - 2^{t} - 5}{n - 2^{t} - 1} \cdots \frac{1}{5} &= &\frac{4!}{(n - 2^{t})\cdots(n - 2^{t} - 3)}\\
&>& \frac{24}{n^4},
\end{eqnarray*}
as required.
\end{proof}

The bound $\frac{24}{|V(G)|^4}$ may not be that impressive, but this can be improved
through repetition. Observe that, if we apply the Random Contraction Algorithm to a feasible instance
$(G;T_1,\ldots,T_t)$, then the algorithm returns an even $(T_1,\ldots,T_t)$-cut.
We can repeatedly apply the algorithm, keeping the smallest of these cuts, to reduce the
error-probability.

\begin{theorem} \label{thm:paritycuts}
Let $t$ and $c$ be positive integers. 
Let $(G; T_1, \ldots, T_t)$ be a feasible instance of the $t$-Set Even-Cut Problem and let $k$ be the
minimum size of a $(T_1,\ldots,T_t)$-even cut. Then in $c|V(G)|^4$ repetitions of the Random Contraction Algorithm,
the probability that we fail to find a 
$(T_1,\ldots,T_t)$-even cut of size $k$ is at most $e^{-24c}$.
\end{theorem}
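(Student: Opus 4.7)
The plan is a very short amplification argument building directly on Lemma~\ref{lem:paritycuts1}. That lemma tells us that a single execution of the Random Contraction Algorithm returns a minimum $(T_1,\ldots,T_t)$-even cut with probability at least $p := 24/n^4$, where $n = |V(G)|$. Since independent repetitions keep the best (smallest) cut found so far, the overall procedure fails to return an optimal cut only if every one of the $N := c n^4$ independent runs fails.

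First I would note that the failure probability of a single run is at most $1 - p = 1 - 24/n^4$, and that the runs are independent, so the probability that all $N$ runs fail is at most $(1 - 24/n^4)^{cn^4}$. Then I would apply the standard inequality $1 - x \le e^{-x}$, valid for all real $x$, with $x = 24/n^4$, to bound this by
\[
\left(1 - \frac{24}{n^4}\right)^{cn^4} \le \left(e^{-24/n^4}\right)^{cn^4} = e^{-24c},
\]
which is exactly the claimed bound.

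There is essentially no obstacle here beyond verifying that the independence assumption is legitimate (the algorithm's random choices in each run are independent of those in other runs, and Lemma~\ref{lem:paritycuts1} gives the per-run success probability unconditionally on a feasible instance). One could also remark that the factor of $24$ in the exponent reflects precisely the constant appearing in Lemma~\ref{lem:paritycuts1}, so any improvement in that constant would immediately sharpen the present statement.
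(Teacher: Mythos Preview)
Your proposal is correct and follows essentially the same approach as the paper: both invoke Lemma~\ref{lem:paritycuts1} for the per-run success probability, bound the failure probability of $cn^4$ independent repetitions by $(1-24/n^4)^{cn^4}$, and apply $1-x \le e^{-x}$ to obtain $e^{-24c}$.
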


\begin{proof}
Let $n= |V(G)|$.
By Lemma~\ref{lem:paritycuts1}, the error-probability is at most
 \[ \left(1 - \frac{24}{n^4}\right)^{cn^4} \leq \left(e^{-\frac{24}{n^4}}\right)^{cn^4} = e^{-24c}. \qedhere\]
\end{proof}

Given that we can solve the $t$-Set Even-Cut Problem efficiently (with randomization),
it is natural to consider the following variation.

\medskip

{\it
\noindent
{\bf The $t$-Set Odd-Cut Problem}\\[1mm]
{\sc Instance:}
A tuple $(G;T_1,\ldots,T_t)$
where $G$ is a graph and $T_1,\ldots,T_t$ are 
even-cardinality subsets of $V(G)$.\\
{\sc Problem:}
Among all proper subsets $X$ of $V(G)$ with
$|T_1\cap X|,\ldots,|T_t\cap X|$ all odd, 
minimize the size of the cut $\delta_G(X)$.
}
\medskip

Padberg and Rao~\cite{pr} give a polynomial-time algorithm 
for the $1$-Set Odd-Cut Problem, and the same method extends easily
to the $2$-Set Odd-Cut Problem, but the complexity of the $3$-Set Odd-Cut 
Problem remains open. 

\section{A variation on even cuts}

To solve the cogirth problem on perturbed graphic matroids we will reduce it to a  variation on the
$t$-Set Even Cut Problem; in this section we will solve that variant.
The methods in this section are similar to those in the previous section, but
we will use different notation.

Let $G$ be a graph and $\tau : V(G)\rightarrow \GF(2)^{t}$.
We denote $\sum( \tau(v)\, : \, v\in X)$ by $\tau(X)$.

\medskip

{\it
\noindent
{\bf The $t$-Dimensional Even-Cut Problem}\\[1mm]
{\sc Instance:}
A tuple $(G,\tau, \Sigma, \alpha)$
where $G$ is a graph, $\tau : V(G)\rightarrow \GF(2)^{t}$,
$\Sigma\subseteq E(G)$, and $\alpha\in\GF(2)^t.$\\
{\sc Problem:}
Find the minimum size of a non-empty set $\delta_G(X)\Delta \Sigma'$
where $(\Sigma',\alpha') \in \{(\Sigma,\alpha), (\emptyset,0)\}$ and $X\subseteq V(G)$ with
$\tau(X)=\alpha'$.
}

\medskip

Consider an instance $(G;T_1,\ldots,T_t)$ of the $t$-Set Even-Cut Problem
where {\em $G$ is connected}. For each $v\in V(G)$ we let 
$\tau(v)\in \GF(2)^t$ where, for each $i\in\{1,\ldots,t\}$, we let $\tau(v)_i=1$ if
$v\in T_i$. This reduces our instance to an instance $(G,\tau,\emptyset,0)$ of 
the $t$-Dimensional Even-Cut Problem. When $G$ is not connected
the problems are not related, since, for the $t$-Dimensional Even-Cut Problem
we explicitly require a non-empty set as a solution. This difference between the 
problems adds a layer of difficulty. Another key difference between
the problems is that, for the $t$-Set Even-Cut Problem, we require the sets
$T_1,\ldots, T_t$ to be even, but for the $t$-Dimensional Even-Cut Problem
we do not require $\tau(V(G))=0$. Thus, for a set $X\subseteq V(G)$, it may not be
the case that $\tau(X)= \tau(V(G)-X)$. This lack of symmetry seems 
a little unnatural, but it does not cause any additional difficulty. A final
difference between the problems is the role of $\Sigma$, which does not 
add to the difficulty at all.

\subsection*{Cogirth}
We start by drawing a connection between this problem and the problem
of computing the cogirth of a binary matroid.
Consider an instance $(G,\tau, \Sigma,\alpha)$
of the $t$-Dimensional Even-Cut Problem.
Let $T=\{1,\ldots,t\}$ and let $A(G)$ be the incidence matrix of $G$.
Now let $B\in \GF(2)^{V(G)\times T}$ where the 
row of $B$ indexed by $v\in V(G)$ is $\tau(v)$, let
$\sigma\in\GF(2)^{E(G)}$ be the characteristic vector of $\Sigma$, and let
$$
A = \bordermatrix{ & E(G) & T \cr  & \sigma & \alpha \cr V(G) & A(G) & B}.
$$
We call $A$ the {\em incidence matrix} of $(G,\tau, \Sigma, \alpha)$, 
and we denote $M(A)\con T$ by $M(G,\tau, \Sigma,\alpha)$.
The next lemma is an easy consequence of these definitions.
\begin{lemma}\label{formulation1}
Let $(G,\tau, \Sigma,\alpha)$ be an instance of the
$t$-Dimensional Even-Cut Problem.
Then a set $C\subseteq E(G)$ is a cocycle of 
$M(G,\tau, \Sigma,\alpha)$ if and only if there exists
 $(\Sigma',\alpha')\in \{(\Sigma,\alpha), (\emptyset,0)\}$ and  $X\subseteq V(G)$ with
$\tau(X)=\alpha'$ such that $C = \delta_G(X)\Delta\Sigma'$.
\end{lemma}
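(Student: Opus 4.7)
The plan is to unpack the definitions in two steps: first translate cocycles of $M(G,\tau,\Sigma,\alpha)$ into cocycles of $M(A)$ with a support condition, and then read off what the row-span description gives.

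First I would use the identity $(M/T)^* = M^*\del T$. Since cocycles of $M(A)\con T$ are, by definition, cycles of $(M(A)\con T)^* = M(A)^*\del T$, they are exactly the cycles of $M(A)^*$ that avoid $T$, i.e.\ the cocycles of $M(A)$ that are contained in $E(G)$. Thus, viewing characteristic vectors as elements of $\GF(2)^{E(G)\cup T}$, a set $C\subseteq E(G)$ is a cocycle of $M(G,\tau,\Sigma,\alpha)$ if and only if the vector $(\chi_C, 0)$ lies in the row-span of $A$.

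Next I would expand the row-span. Every element of the row-span has the form
\[
\lambda\,(\sigma,\alpha) + \sum_{v\in X}(a_v,\tau(v)) \;=\; \Bigl(\lambda\sigma + \sum_{v\in X} a_v,\; \lambda\alpha+\tau(X)\Bigr),
\]
for some $\lambda\in\GF(2)$ and $X\subseteq V(G)$, where $a_v$ denotes the row of $A(G)$ indexed by $v$. Since $A(G)$ is the $\GF(2)$-incidence matrix of $G$, one has the standard identity $\sum_{v\in X}a_v = \chi_{\delta_G(X)}$. The requirement that the $T$-coordinates vanish gives $\tau(X) = \lambda\alpha$, while the $E(G)$-coordinates give $\chi_C = \chi_{\delta_G(X)} + \lambda\chi_\Sigma$, i.e.\ $C = \delta_G(X)\Delta(\lambda\Sigma)$.

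Finally I would split into the two cases $\lambda=0$ and $\lambda=1$: when $\lambda=0$ we obtain $C = \delta_G(X)$ with $\tau(X)=0$, corresponding to the choice $(\Sigma',\alpha')=(\emptyset,0)$; when $\lambda=1$ we obtain $C = \delta_G(X)\Delta\Sigma$ with $\tau(X)=\alpha$, corresponding to $(\Sigma',\alpha')=(\Sigma,\alpha)$. These are precisely the cocycles described in the statement, and conversely every such cocycle arises from one of these two choices, so both directions follow at once. There is no real obstacle here; the only care needed is to keep the bookkeeping between $\lambda=0$ and $\lambda=1$ straight and to remember that the matroid operation on the $T$-coordinates is contraction (so the cocycle-support must avoid $T$), not deletion.
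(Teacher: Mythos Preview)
Your proof is correct and follows essentially the same route as the paper: both first observe that cocycles of $M(A)\con T$ are precisely the cocycles of $M(A)$ supported on $E(G)$, and then expand the row-span of $A$ to read off the two cases according to whether the top row is used. Your write-up is in fact a bit more explicit than the paper's (which leaves the $\lambda=0/1$ split implicit), but the argument is the same.
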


\begin{proof} A set $C\subseteq E(G)$ is a cocycle of $M(A)\con T$ if and only if 
it is a cocycle of $M(A)$. Therefore the set of cocycles of $M(A)\con T$ consists
of the set of all sets obtained by taking the support of a vector $x$ in the rowspace of 
$A$ with $x|T=0$. Consider $\sigma'\in \GF(2)^{E(G)}$ and
$\alpha'\in \GF(2)^T$, and let $\Sigma'$ be the support of $\sigma'$.
Note that $(\sigma',\alpha')$ is in the row space of $(A(G),\, B)$ if and only if there 
exists $X\subseteq V(G)$ such that $\delta_G(X) =\Sigma'$ and $\tau(X)=\alpha'$.
Now the result follows easily.
\end{proof} 

So the $t$-Dimensional Even-Cut Problem is simply the problem 
of determining the cogirth of $M(G,\tau, \Sigma,\alpha)$. 

\subsection*{Connectivity reductions}
A {\em connected} instance  of the $t$-Dimensional Even-Cut Problem
is an instance $(G,\tau, \Sigma,\alpha)$ such that
$G$ is connected.
We will describe two reductions that, together, reduce
an instance $(G,\tau, \Sigma,\alpha)$ of the
$t$-Dimensional Even-Cut Problem to connected instances.

We call a component of a graph {\em trivial} if it has exactly one vertex.
The first reduction reduces us to an instance $(G',\tau',\Sigma,\alpha)$
in which $G'$ has at most one non-trivial component.
Let $H_1,\ldots,H_c$ denote the components of $G$ and,
for each $i\in\{1,\ldots,c\}$, choose a vertex $v_i$ of $H_i$.
Let  $G'$ be the graph obtained from $G$ by identifying
the set of vertices $\{v_1,\ldots,v_c\}$ to a single vertex $v_1$
and then adding new isolated vertices $v_2,\ldots,v_c$.
Now for each $v\in V(G')$ we define
$$
\tau'(v) = \left\{
\begin{array}{ll}
\tau(v), & v\not\in\{v_1,\ldots,v_c\} \\
\tau(v_1)+\cdots + \tau(v_c), & v=v_1 \\
\sum(\tau(w)\, : \, w\in V(H_i)), & v=v_i
\end{array}
\right.
$$

\begin{lemma}\label{lem:conred1}
$M(G,\tau, \Sigma,\alpha)= M(G',\tau', \Sigma,\alpha)$.
\end{lemma}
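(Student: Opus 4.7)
The plan is to prove $M(G,\tau,\Sigma,\alpha) = M(G',\tau',\Sigma,\alpha)$ by showing that the incidence matrices $A$ and $A'$ of $(G,\tau,\Sigma,\alpha)$ and $(G',\tau',\Sigma,\alpha)$, viewed as $\GF(2)$-matrices on the common column set $E(G)\cup T$ (note $E(G)=E(G')$ since the construction of $G'$ only relabels endpoints), have the same row space. Equal row spaces imply $M(A)=M(A')$ as matroids on $E(G)\cup T$, and contracting $T$ on both sides then yields the required matroid equality. I prefer this direct linear-algebraic route to the alternative cocycle-by-cocycle comparison through Lemma~\ref{formulation1}, because it avoids having to massage $X\subseteq V(G)$ into a matching $X'\subseteq V(G')$ by ad-hoc component flips.

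I would first observe that the top row $(\sigma,\alpha)$ is literally identical in $A$ and $A'$, and that for every vertex $v\notin\{v_1,\ldots,v_c\}$ the row of $A$ indexed by $v$ equals the row of $A'$ indexed by $v$: the construction of $G'$ only moves endpoints labelled $v_1,\ldots,v_c$, so $v$'s incidence vector is unchanged, and $\tau'(v)=\tau(v)$ by definition. Thus only the rows indexed by $v_1,\ldots,v_c$ need analysis.

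The key tools are two $\GF(2)$-identities in $\GF(2)^{E(G)}$. Let $\chi_v^G$ denote the row of $A(G)$ indexed by $v$. First, for each component $H_i$ of $G$, $\sum_{v\in V(H_i)}\chi_v^G=0$, because every non-loop edge of $H_i$ contributes its two endpoints once each and loops contribute zero under the $\GF(2)$ incidence convention. Second, $\chi_{v_1}^{G'}=\sum_{j=1}^{c}\chi_{v_j}^G$, because after identification the vertex $v_1$ is incident to precisely those edges of $G$ with an odd number of endpoints in $\{v_1,\ldots,v_c\}$, which (using that $G$ has no edges between distinct components) is exactly what $\sum_j\chi_{v_j}^G$ computes. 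Combined with the definitions $\tau'(v_i)=\tau(V(H_i))$ for $i\geq 2$ and $\tau'(v_1)=\tau(v_1)+\cdots+\tau(v_c)$, these give clean expressions for each modified row: for $i\geq 2$ the row $r_{A'}(v_i)=(0,\tau(V(H_i)))$ is $\sum_{v\in V(H_i)} r_A(v)$, and $r_{A'}(v_1)=\sum_{j=1}^{c}r_A(v_j)$. This establishes $\operatorname{rowspan}(A')\subseteq \operatorname{rowspan}(A)$. For the reverse inclusion I would use identity~(i) again to get $\sum_{v\in V(H_i)} r_{A'}(v) = r_A(v_i)$ for $i\geq 2$ (the two copies of $\tau(V(H_i))$ cancel), and then rearrange the $v_1$-identity to recover $r_A(v_1)$.

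The main obstacle I anticipate is the asymmetric treatment of $v_1$ (absorbed into the large component of $G'$) and $v_2,\ldots,v_c$ (reinstated as isolated vertices): the definitions of $\tau'(v_1)$ and $\tau'(v_i)$ for $i\geq 2$ look ad hoc and must be chosen so that identities~(i) and~(ii) combine into row-space equality. Once this observation is in place, the verification amounts to routine $\GF(2)$-bookkeeping with no further subtleties.
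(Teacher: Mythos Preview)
Your proof is correct and follows essentially the same approach as the paper: both arguments show that the incidence matrices $A$ and $A'$ are row-equivalent over $\GF(2)$, whence $M(A)=M(A')$ and therefore $M(G,\tau,\Sigma,\alpha)=M(A)\con T=M(A')\con T=M(G',\tau',\Sigma,\alpha)$. The paper's proof merely asserts that $A'$ is obtained from $A$ by elementary row operations without spelling them out, whereas you have supplied exactly those details (identities (i) and (ii) and the two-way row-span containments); your write-up is thus a faithful expansion of the paper's one-line argument rather than a different route.
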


\begin{proof}
Let $A$ be the incidence matrix of $(G,\tau, \Sigma,\alpha)$ and
let $A'$ be the incidence matrix of $(G',\tau', \Sigma,\alpha)$.
Note that $A'$ is obtained from $A$ by a sequence of elementary row operations.
Thus $M(A) =M(A')$ and, hence, $M(G,\tau, \Sigma,\alpha) = M(G',\tau', \Sigma,\alpha)$.
\end{proof}

We may assume that $G'$ has a non-trivial component, say $G''$,
since otherwise we can easily compute the cogirth of
$M(G',\tau', \Sigma,\alpha)$. Let $\tau''$ be the restriction
of $\tau'$ to $V(G'')$.
\begin{lemma}\label{lem:conred2}
The cogirth of $M(G',\tau', \Sigma,\alpha)$ is equal to the minimum 
of the cogirths of $M(G'',\tau'', \Sigma,\alpha+\beta)$ taken over 
all $\beta$ in the span of $(\tau(v)\, : \, v\in V(G')-V(G''))$.
\end{lemma}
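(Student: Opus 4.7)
The plan is to apply Lemma~\ref{formulation1} on both matroids and translate cocycles of $M(G',\tau',\Sigma,\alpha)$ into cocycles of the smaller matroids $M(G'',\tau'',\Sigma,\alpha+\beta)$ via the parameter $\beta$. The key structural input is that, by the preceding construction, every vertex of $V(G')\setminus V(G'')$ is isolated in $G'$.

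First, for any $X\subseteq V(G')$ I decompose $X = X''\cup Y$ with $X'' = X\cap V(G'')$ and $Y = X\setminus V(G'')$. Because the vertices in $Y$ have no incident edges, $\delta_{G'}(X) = \delta_{G''}(X'')$, while $\tau'(X) = \tau''(X'') + \tau'(Y)$. Setting $\beta := \tau'(Y)$, as $Y$ varies over subsets of $V(G')\setminus V(G'')$ the value $\beta$ ranges exactly over the span $S$ in the lemma's statement (using that the definition of $\tau'$ at the isolated vertices aligns the spans of $\tau'$ and $\tau$ over this set).

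Next I apply Lemma~\ref{formulation1}. A non-empty $C\subseteq E(G'')$ is a cocycle of $M(G',\tau',\Sigma,\alpha)$ iff there exist $(\Sigma',\alpha')\in\{(\Sigma,\alpha),(\emptyset,0)\}$, a set $X''\subseteq V(G'')$, and some $\beta\in S$ with $\tau''(X'') = \alpha'+\beta$ and $C = \delta_{G''}(X'')\Delta\Sigma'$. Matching this against Lemma~\ref{formulation1} applied to $M(G'',\tau'',\Sigma,\alpha+\beta)$, case-by-case on $(\Sigma',\alpha')$, identifies the cocycle set of the big matroid with the union over $\beta\in S$ of the cocycle sets of $M(G'',\tau'',\Sigma,\alpha+\beta)$; taking the minimum cardinality of a non-empty cocycle on each side then yields the equality of cogirths. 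The converse direction is immediate: given a cocycle of some small matroid, lift $X''$ to $X = X''\cup Y$ with $\tau'(Y) = \beta$ and read off the same cocycle of the big matroid.

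The main obstacle is the case bookkeeping in the identification step. The $(\Sigma,\alpha)$ branch of the big-matroid characterization pairs directly with the $(\Sigma,\alpha+\beta)$ branch of the small matroid for the same $\beta$, but the $(\emptyset,0)$ branch of the big matroid---giving cocycles $\delta_{G''}(X'')$ with $\tau''(X'')\in S$---requires a careful change of variable in $\beta$, possibly using the fact that $G''$ is connected (so $\delta_{G''}(X'') = \delta_{G''}(V(G'')\setminus X'')$) in order to fit these into the $(\emptyset,0)$ or $(\Sigma,\alpha+\beta)$ branch of the appropriate small matroid.
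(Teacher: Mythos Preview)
Your approach via Lemma~\ref{formulation1} is essentially the same as the paper's approach via incidence matrices: the paper's two bullets (``each cocycle of $M(A_\beta)$ is a cocycle of $M(A')$'' and conversely) amount precisely to the cocycle-set identification you are attempting. Your write-up is more explicit, and you have correctly isolated the one non-trivial point, namely the $(\emptyset,0)$ branch.

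Unfortunately that obstacle is real and is not resolved by either of the devices you propose. A cocycle of $M(G',\tau',\Sigma,\alpha)$ of the form $\delta_{G''}(X'')$ with $\tau''(X'')=\beta\in S\setminus\{0,\tau''(V(G''))\}$ need not be a cocycle of any $M(G'',\tau'',\Sigma,\alpha+\beta')$: the $(\emptyset,0)$ branch of the small matroid forces $\tau''(X'')\in\{0,\tau''(V(G''))\}$, and the $(\Sigma,\alpha+\beta')$ branch is only available when $\Sigma$ is itself a cut of $G''$. Concretely, let $G''$ be two triangles $\{a,b,c\}$ and $\{d,e,f\}$ joined by a bridge $cd$, set $\tau''(c)=(1,0)$, $\tau''(d)=(0,1)$, $\tau''=0$ elsewhere, $\Sigma=\{ab,de\}$ (one edge from each triangle, hence not a cut), $\alpha=0$, and let $G'$ have two isolated vertices with labels $(1,0)$ and $(0,1)$ so that $S=\GF(2)^2$. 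Then $\{cd\}=\delta_{G''}(\{a,b,c\})$ is a cocycle of $M(G',\tau',\Sigma,\alpha)$ of size~$1$, while one checks that every $M(G'',\tau'',\Sigma,\beta)$ has cogirth~$2$. So neither your sketch nor the paper's ``easy to see'' second bullet goes through as written; the reduction needs to be amended (for example by also ranging over the instances $M(G'',\tau'',\emptyset,\beta)$ for $\beta\in S$, which picks up exactly the missing cocycles).
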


\begin{proof}
Let $A'$ be the incidence matrix of $(G',\tau', \Sigma,\alpha)$ and, for a vector
$\beta$ in the span of $(\tau(v)\, : \, v\in V(G')-V(G''))$, let
$A_{\beta}$ be the incidence matrix of $(G'',\tau'', \Sigma,\alpha+\beta)$.
It is easy to see that:
\begin{itemize}
\item each cocycle of $M(A_{\beta})$ is a cocycle of $M(A')$, and 
\item for each cocycle $C$ of $M(A')$ there is a vector
 $\beta$ in the span of $(\tau(v)\, : \, v\in V(G')-V(G''))$
 such that $C$ is a cocycle of $M(A_{\beta})$.
 \end{itemize}
Hence the result follows easily.
\end{proof}

Lemmas~\ref{lem:conred1} and~\ref{lem:conred2}
give a linear time reduction of an instance of the
$t$-Dimensional Even-Cut Problem to at most $2^t$
connected instances.

\subsection*{Feasibility} By Lemma~\ref{formulation1},
an instance $(G,\tau, \Sigma,\alpha)$ of the 
$t$-Dimensional Even-Cut Problem is feasible if and only if
$M(G,\tau, \Sigma,\alpha)$ has positive rank; the following 
result gives a simple sufficient condition.
\begin{lemma}
Let $(G,\tau, \Sigma,\alpha)$ be a connected instance of the 
$t$-Dimensional Even-Cut Problem. If $|V(G)|\ge t+2$,
then $(G,\tau, \Sigma,\alpha)$ is feasible.
\end{lemma}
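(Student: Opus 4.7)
The plan is to exhibit a non-empty cocycle of $M(G,\tau,\Sigma,\alpha)$, which by Lemma~\ref{formulation1} is exactly what feasibility requires. Among the two possibilities $(\Sigma',\alpha')\in\{(\Sigma,\alpha),(\emptyset,0)\}$ offered by that lemma, I would focus entirely on the simpler choice $(\Sigma',\alpha')=(\emptyset,0)$: it suffices to find a subset $X\subseteq V(G)$ with $\tau(X)=0$ and $\delta_G(X)\neq\emptyset$, and then $\delta_G(X)$ itself is a non-empty cocycle.

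The first step is a dimension count. Extend $\tau$ linearly to a $\GF(2)$-linear map $\tilde\tau\colon \GF(2)^{V(G)}\rightarrow\GF(2)^t$ by sending the characteristic vector of $X$ to $\tau(X)$. Its kernel has dimension at least $|V(G)|-t\ge 2$, so it contains at least $4$ elements.

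Next I would use the connectedness hypothesis to convert a kernel element into a non-empty cut. The kernel always contains $\emptyset$, and it contains $V(G)$ only when $\tau(V(G))=0$; either way the kernel contains at least two subsets $X$ of $V(G)$ with $\emptyset\subsetneq X\subsetneq V(G)$. For any such $X$, connectedness of $G$ forces $\delta_G(X)\neq\emptyset$. Taking $(\Sigma',\alpha')=(\emptyset,0)$ in Lemma~\ref{formulation1}, the set $\delta_G(X)\Delta\emptyset=\delta_G(X)$ is then a non-empty cocycle of $M(G,\tau,\Sigma,\alpha)$, proving feasibility.

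There is no real obstacle here; the only subtlety worth flagging is that $\tau$ need not send $V(G)$ to $0$, so one cannot simply argue by ``symmetry'' $X\leftrightarrow V(G)-X$. The pigeonhole/dimension argument above sidesteps this by producing two distinct proper non-empty members of the kernel regardless of the value of $\tau(V(G))$, which is why the threshold is $t+2$ rather than $t+1$.
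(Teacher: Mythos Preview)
Your proof is correct, but it takes a more constructive route than the paper's. The paper argues directly with the incidence matrix $A$ of $(G,\tau,\Sigma,\alpha)$: since $G$ is connected, $A(G)$ has rank $|V(G)|-1$, so $\rank(A)\ge |V(G)|-1>t=|T|$, and therefore $M(A)\con T$ has positive rank and thus a non-empty cocycle. Connectedness enters only implicitly, through the rank of the incidence matrix of $G$.

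Your approach instead performs the dimension count on $\ker\tilde\tau$ and then invokes connectedness explicitly to turn a proper non-empty kernel element $X$ into a non-empty cut $\delta_G(X)$, appealing to Lemma~\ref{formulation1} with $(\Sigma',\alpha')=(\emptyset,0)$. This is slightly longer but has the advantage of actually exhibiting a feasible solution (namely $\delta_G(X)$), whereas the paper's rank argument is non-constructive. Your final remark about why the threshold is $t+2$ rather than $t+1$ is also a nice piece of intuition that the paper's proof does not surface.
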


\begin{proof} Let $A$ be the incidence matrix of 
$(G,\tau, \Sigma,\alpha)$. Now,
if $|V(G)|\ge t+2$, then
$$ \rank(A) \ge |V(G)|-1 > t. $$
Therefore $M(A)\con T$ has positive rank, and, hence,
$M(G,\tau, \Sigma,\alpha)$ has a non-empty cocycle.
\end{proof}

\subsection*{The algorithm}
Consider an instance $(G,\tau, \Sigma,\alpha)$ of the $t$-Dimensional Even-Cut Problem.
Let $A$ be the incidence matrix of $(G,\tau, \Sigma,\alpha)$  and let $e$ be an edge with ends $x$ and $y$ in $G$.
We will describe a new instance $(G',\tau', \Sigma',\alpha')$, with incidence matrix $A'$,
such that $M(A)\con e = M(A')$.
Let $G\con e$ denote the graph obtained
by contracting $e$ to a new vertex $z$.  For each $v\in V(G')$, we define
$$
\tau'(v) = \left\{
\begin{array}{ll} \tau(x)+\tau(y), & v=z \\ \tau(v), &\mbox{otherwise.}
\end{array}\right.
$$
If $e\not\in\Sigma$, we let $\Sigma'=\Sigma$ and $\alpha'=\alpha$.
If $e\in \Sigma$, we let $\Sigma'=\Sigma\Delta \delta_G(x)$ and $\alpha'=\alpha+\tau(x)$.
We denote $(G',\tau',\Sigma',\alpha')$ by $(G,\tau,\Sigma,\alpha)\con e$; note that
there is some ambiguity here since $x$ plays a distinguished role, but 
it does not matter which end of $e$ we choose for the algorithm. 
The following lemma is easy, we omit the proof.
\begin{lemma}\label{lem:contract}
Let $A$ be the incidence matrix of $(G,\tau, \Sigma,\alpha)$ and let 
$A'$ be the incidence matrix of $(G,\tau, \Sigma,\alpha)\con e$. Then
$M(A)\con e = M(A')$.
\end{lemma}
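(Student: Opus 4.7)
The plan is to appeal to the standard matroid-theoretic fact that, for a matrix $A$ over $\GF(2)$ and a column $e$ with a nonzero entry in some row $r$, the matroid $M(A)\con e$ is represented by the matrix obtained from $A$ by pivoting on entry $(r,e)$ (i.e.\ adding row $r$ to every other row in which column $e$ has a $1$) and then deleting row $r$ together with column $e$. So the task reduces to choosing a convenient pivot row in column $e$ of $A$ and checking that, after performing the pivot and deletion, one recovers exactly $A'$.

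I would split on whether $e \in \Sigma$. First suppose $e\notin\Sigma$. Then column $e$ of $A$ has support $\{x,y\}$; I would pivot on row $x$, i.e.\ add row $x$ to row $y$, and then delete row $x$ and column $e$. On the $E(G)\setminus\{e\}$ block, the new row $y$ becomes the sum of the indicator vectors of $\delta_G(x)$ and $\delta_G(y)$, which is precisely the row of $A(G\con e)$ indexed by the contracted vertex $z$ (each edge with both ends in $\{x,y\}$ contributes $1+1=0$, consistent with becoming a loop at $z$). On the $T$-block that row becomes $\tau(x)+\tau(y)=\tau'(z)$. All other rows restrict to the corresponding rows of $A'$, and the top row $(\sigma,\alpha)$ is unchanged, matching $(\Sigma',\alpha')=(\Sigma,\alpha)$.

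Next suppose $e\in\Sigma$. Then column $e$ has support $\{*,x,y\}$, where $*$ denotes the top row. I would again pivot on row $x$, which means adding row $x$ to both row $y$ and the top row, and then deleting row $x$ and column $e$. The modification of row $y$ is identical to the previous case and yields the $\tau'(z)$ row of $A'$. The top row becomes $\sigma+(A(G))_x$ on the $E(G)$-block, whose support is $\Sigma\Delta\delta_G(x)$; since $e$ lies in both $\Sigma$ and $\delta_G(x)$ it is absent from this symmetric difference, so after restriction to $E(G)\setminus\{e\}$ we obtain exactly $\Sigma'$. On the $T$-block, the top row becomes $\alpha+\tau(x)=\alpha'$. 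Again the remaining rows are unchanged, producing $A'$ exactly.

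The argument is almost entirely bookkeeping, so no single step is genuinely hard; the only care needed is verifying that the edges which become loops at $z$ after contracting $e$ in $G$ correspond correctly to zero entries in the pivoted matrix, and that the sign/parity adjustments on $\Sigma$ and $\alpha$ in the $e\in\Sigma$ case agree with the row operation performed on the top row. Once these two matches are checked, Lemma~\ref{lem:contract} follows immediately from the pivot-and-delete description of matroid contraction.
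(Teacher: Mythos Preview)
Your proposal is correct; the paper itself omits the proof entirely, noting only that the lemma is easy. The pivot-and-delete verification you carry out is precisely the natural argument, and your case split on whether $e\in\Sigma$ together with the check that parallel edges become zero columns after the row operation handles all the bookkeeping cleanly.
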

With this notation in place, we can state our algorithm.

\medskip
{\it
\noindent
{\bf Random Contraction Algorithm (revised)}\\[1mm]
{\sc Input:} A feasible  connected instance $(G,\tau, \Sigma,\alpha)$ of the $t$-Dimensional Even-Cut Problem.\\[1mm]
{\sc Step 1.} If $|V(G)|\le 2^t+4$, find a minimum cardinality cocycle $C$ 
of $M(G,\tau, \Sigma,\alpha)$ by 
 exhaustive search and then stop and return $C$.\\
{\sc Step 2.} Choose a non-loop edge $e$ of $G$ uniformly at random and replace the instance
 $(G,\tau, \Sigma,\alpha)$ with $(G,\tau, \Sigma,\alpha)\con e$. Then repeat from Step~1.
}
\medskip

For an $n$-vertex $m$-edge graph, the Random Contraction Algorithm takes $\O(nm)$ time 
(note that Step~1 can be done efficiently because $M(G,\tau, \Sigma,\alpha)$ has rank at most $|V(G)| + 1$).
The following analysis is the same as for the $t$-Set Even-Cut Problem. 

\begin{lemma} \label{lem:claim1v2}
 Let $(G,\tau, \Sigma,\alpha)$ be a feasible connected instance of the $t$-Dimensional Even-Cut Problem and let $k$ be the 
 optimal value. If $\ell$ is the number of loops in $G$, then $(|V(G)| - 2^{t}) k \leq 4 (|E(G)| - \ell)$.
\end{lemma}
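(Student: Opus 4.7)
The proof will mirror that of Lemma~\ref{lem:claim1} nearly verbatim, with two small changes: the equivalence relation on $V(G)$ is now defined via $\tau$ rather than via the sets $T_1,\ldots,T_t$, and loops of $G$ must be subtracted from the edge count since they lie in no cut $\delta_G(X)$.

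Concretely, I would define $u \sim v$ on $V(G)$ by $\tau(u) = \tau(v)$ and let $\Pi$ be the partition into equivalence classes. Since $\tau$ takes values in $\GF(2)^t$, we have $|\Pi| \le 2^t$. If $|V(G)| \le 2^t$ the left-hand side of the claim is non-positive and the inequality is trivial, so I may assume $|V(G)| > 2^t$, in particular $|V(G)|\ge 3$. For each class $P \in \Pi$, fix an ordering $(v_1, \ldots, v_{|P|})$ of its elements and set $X_P = \{\{v_i, v_{i+1}\} : 1 \le i < |P|\}$. For every $X \in X_P$ we have $\tau(X) = \tau(v_i) + \tau(v_{i+1}) = 0$, so by Lemma~\ref{formulation1}, applied with $(\Sigma', \alpha') = (\emptyset, 0)$, the set $\delta_G(X)$ is a cocycle of $M(G,\tau,\Sigma,\alpha)$. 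Because $G$ is connected and $X$ is a non-empty proper subset of $V(G)$, the cut $\delta_G(X)$ is itself non-empty, and hence $|\delta_G(X)| \ge k$.

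A double-counting argument then closes the proof. Each vertex of a class $P$ lies in at most two of the pairs in $X_P$, so each non-loop edge of $G$ lies in at most four of the cuts $(\delta_G(X) : P \in \Pi,\ X \in X_P)$; loops of $G$ lie in none of them. Therefore
\[
4(|E(G)| - \ell) \;\ge\; \sum_{P \in \Pi} \sum_{X \in X_P} |\delta_G(X)| \;\ge\; (|V(G)| - |\Pi|)\, k \;\ge\; (|V(G)| - 2^t)\, k.
\]

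There is no single hard step; the argument is essentially bookkeeping. The one point that requires a little extra care compared with Lemma~\ref{lem:claim1} is the non-emptiness of each $\delta_G(X)$ used above: in the $t$-Dimensional setting, $k$ is the minimum size of a \emph{non-empty} cocycle, so an empty $\delta_G(X)$ would not give the bound $|\delta_G(X)| \ge k$. This is exactly where the connectedness hypothesis is used.
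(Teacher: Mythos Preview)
Your proof is correct and follows essentially the same approach as the paper's: partition $V(G)$ by $\tau$-value, use consecutive pairs within each class as the sets $X$, and double-count edge contributions to the resulting cuts. If anything, you are slightly more careful than the paper in explicitly invoking connectedness to ensure each $\delta_G(X)$ is non-empty before concluding $|\delta_G(X)|\ge k$; the paper's proof leaves this implicit.
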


\begin{proof}
Let $\Pi$ be the partition of $V(G)$ into sets with equal $\tau$-value; thus $|\Pi|\le 2^t$.
 For each set $P\in \Pi$, fix an ordering $(v_1,\ldots,v_{|P|})$ of the elements of
 $P$  and let $X_P = \{\{v_i,v_{i+1}\}\, : \, 1\le i<|P|\}$.
 Note that, for each $X\in X_P$, the cut $\delta(X)$ is 
 a cocycle of $M(G,\tau, \Sigma,\alpha)$, and, hence, $|\delta(X)| \ge k$.
 Moreover, each vertex of $P$ appears in at most two of the sets in $X_P$, so each edge of $G$ appears in at most four of 
the cocycles $(\delta(X)\, : \, P\in\Pi,\, X\in X_P)$.
 Therefore, $4 (|E(G)| - \ell) \geq \sum(|\delta(X)|\, : \, P\in\Pi,\, X\in X_P) \geq (|V(G)| - 2^{t})k$.
\end{proof}

\begin{lemma} \label{lem:paritycuts1v2}
Let $(G,\tau, \Sigma,\alpha)$ be a feasible connected instance of the $t$-Dimensional Even-Cut Problem and let $k$ be the
optimal value. Then the Random Contraction Algorithm returns a cocycle of size
$k$ with probability at least $\frac{24}{|V(G)|^4}$.
 \end{lemma}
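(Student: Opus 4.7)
The plan is to mimic the proof of Lemma~\ref{lem:paritycuts1} essentially verbatim. I would fix a minimum cocycle $C^*$ of $M(G,\tau,\Sigma,\alpha)$ with $|C^*|=k$, set $n=|V(G)|$, and assume $n > 2^t + 4$ (otherwise Step~1 fires immediately and returns the exact answer).

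The key structural observation I would use is that cocycles of $M(A)\con e$ are precisely those cocycles of $M(A)$ that avoid $e$; this is immediate from the row-space description, since $M(A)\con e$ corresponds to the subspace of the row space whose $e$-coordinate is zero. In particular, no loop lies in any cocycle. Combined with Lemma~\ref{lem:contract}, this shows that if every edge contracted by the algorithm lies outside $C^*$, then $C^*$ survives as a cocycle of the current matroid at every iteration. Since contraction cannot introduce new cocycles, the minimum cocycle size can only grow; so at termination Step~1 returns a cocycle of size exactly $k$.

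Next I would bound, at each iteration, the probability that the randomly chosen non-loop edge $e$ lies outside $C^*$. Because loops are never in $C^*$ and Step~2 draws uniformly from the non-loop edges, this probability equals $k/(|E(G)|-\ell)$, where $\ell$ is the current number of loops. Lemma~\ref{lem:claim1v2} then gives
$$ {\bf Prob}[e\notin C^*] \ge \frac{|V(G)|-2^t-4}{|V(G)|-2^t}, $$
which is exactly the per-step bound appearing in the proof of Lemma~\ref{lem:paritycuts1}. Taking the product over the $n-2^t-4$ iterations of Step~2 yields the telescoping estimate
$$ \prod_{i=0}^{n-2^t-5}\frac{n-2^t-4-i}{n-2^t-i} \;=\; \frac{4!}{(n-2^t)(n-2^t-1)(n-2^t-2)(n-2^t-3)} \;>\; \frac{24}{n^4}. $$

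There is no real obstacle beyond routine bookkeeping. The one point worth double-checking is the handling of loops: contraction may create new loops, but since loops are excluded from every cocycle and Step~2 explicitly avoids them, they never affect the probability estimate, and the term $\ell$ enters correctly through Lemma~\ref{lem:claim1v2}. Once that is verified, the argument is combinatorially identical to the proof of Lemma~\ref{lem:paritycuts1}.
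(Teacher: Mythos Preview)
Your proposal is correct and follows the paper's proof essentially verbatim: fix a minimum cocycle $C^*$, use Lemma~\ref{lem:claim1v2} to bound the per-step probability of hitting $C^*$, and telescope the product over the $n-2^t-4$ contractions. (One small slip: where you write that ``this probability equals $k/(|E(G)|-\ell)$'' you mean the probability that $e$ lies \emph{in} $C^*$, not outside it; the displayed bound for ${\bf Prob}[e\notin C^*]$ is nonetheless correct.)
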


\begin{proof}
Let $C^*$ be a minimum cardinality non-empty cocycle of $M(G,\tau, \Sigma,\alpha)$ and let $n=|V(G)|$.
Consider an edge $e$ chosen in Step~2. Note that, if  $e\not\in C^*$, then $C^*$ remains optimal
for the instance $(G,\tau, \Sigma,\alpha)\con e$.
Let $\ell$ be the number of loops in $G$. By Lemma~\ref{lem:claim1v2},
$$
 {\bf Prob} [e \notin C^*] \geq 1 - \frac{k}{|E(G)| - \ell} \geq \frac{n - 2^{t} - 4}{n - 2^{t}}.
$$

We repeat Step~2 a total of $n- 2^t - 4$ times on successively smaller graphs; the probability that 
we never choose an edge of $C^*$ is at least:
\begin{eqnarray*}
\frac{n - 2^{t} -4}{n - 2^{t}} \cdot \frac{n - 2^{t} - 5}{n - 2^{t} - 1} \cdots \frac{1}{5} &= &\frac{4!}{(n - 2^{t})\cdots(n - 2^{t} - 3)}\\
&>& \frac{24}{n^4},
\end{eqnarray*}
as required.
\end{proof}

Observe that, if we apply the Random Contraction Algorithm to a feasible instance
$(G,\tau, \Sigma,\alpha)$, then the algorithm returns a cocycle of $M(G,\tau, \Sigma,\alpha)$.
We can repeatedly apply the algorithm, keeping the smallest of these cocycles, to reduce the
error-probability.

\begin{theorem} \label{thm:paritycutsv2}
Let $t$ and $c$ be positive integers. 
Let $(G,\tau, \Sigma,\alpha)$ be a feasible connected instance of the $t$-Dimensional Even-Cut Problem and let $k$ be the
optimal value. Then, in $c|V(G)|^4$ repetitions of the Random Contraction Algorithm,
the probability that we fail to find a 
cocycle of $M(G,\tau, \Sigma,\alpha)$ of size $k$ is at most $e^{-24c}$.
\end{theorem}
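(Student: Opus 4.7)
The plan is to mirror the proof of Theorem~\ref{thm:paritycuts} exactly, since the statement and the key single-run success bound have already been arranged to match. The only ingredients needed are Lemma~\ref{lem:paritycuts1v2} (giving a lower bound on the success probability of one execution) together with independence of the repetitions and the standard inequality $1-x \le e^{-x}$ for $x \in [0,1]$.

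First I would set $n = |V(G)|$ and recall from Lemma~\ref{lem:paritycuts1v2} that a single execution of the Random Contraction Algorithm returns a cocycle of size exactly $k$ with probability at least $24/n^{4}$. Hence the probability that a single run fails to return an optimal cocycle is at most $1 - 24/n^{4}$. Since the $cn^{4}$ repetitions are performed independently and we retain the smallest cocycle seen, the overall procedure fails only if every one of them fails.

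Next I would bound the overall failure probability by the product, giving
\[
\left(1 - \frac{24}{n^{4}}\right)^{cn^{4}} \le \left(e^{-24/n^{4}}\right)^{cn^{4}} = e^{-24c},
\]
where the inequality uses $1-x \le e^{-x}$ with $x = 24/n^{4} \in [0,1]$ (noting that the theorem is only meaningful when $n \ge 2^{t}+5$, so in particular $n \ge 2$ and $24/n^{4} \le 1$; in the degenerate small-$n$ case the algorithm already solves the problem exactly in Step~1, so there is nothing to prove).

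There is no real obstacle here; the whole content is packaged in Lemma~\ref{lem:paritycuts1v2}, and the remainder is the same short calculation as in Theorem~\ref{thm:paritycuts}. The only thing one might worry about is whether the sequence of contracted instances remains feasible across all $cn^{4}$ runs, but each run starts afresh from the original feasible connected instance $(G,\tau,\Sigma,\alpha)$, so feasibility is preserved throughout and the per-run analysis of Lemma~\ref{lem:paritycuts1v2} applies verbatim.
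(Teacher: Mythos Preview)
Your proposal is correct and matches the paper's proof essentially line for line: set $n=|V(G)|$, invoke Lemma~\ref{lem:paritycuts1v2} for the single-run bound, and apply $1-x\le e^{-x}$ to $(1-24/n^4)^{cn^4}$. The extra remarks about independence, the small-$n$ case, and feasibility are fine but go slightly beyond what the paper bothers to say.
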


\begin{proof}
Let $n= |V(G)|$.
By Lemma~\ref{lem:paritycuts1v2}, the error-probability is at most
 \[ \left(1 - \frac{24}{n^4}\right)^{cn^4} \leq \left(e^{-\frac{24}{n^4}}\right)^{cn^4} = e^{-24c}. \qedhere\]
\end{proof}

Combining the connectivity reduction with Theorem~\ref{thm:paritycutsv2}
gives the following result.
\begin{theorem} \label{thm:paritycutsv3}
Let $t$ be a positive integer and let $\epsilon>0$.
There is a randomized algorithm that, given an instance of  the $t$-Dimensional Even-Cut Problem
with $n$ vertices and $m$ edges will, with probability at least $1-\epsilon$,  correctly compute the optimal
value in $O(n^5m)$ time.
\end{theorem}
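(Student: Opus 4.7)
The plan is to assemble Theorem~\ref{thm:paritycutsv3} as a direct composition of the connectivity reduction (Lemmas~\ref{lem:conred1} and~\ref{lem:conred2}) with the single-instance analysis of Theorem~\ref{thm:paritycutsv2}, via a union bound. First I would apply Lemma~\ref{lem:conred1} to the input $(G,\tau,\Sigma,\alpha)$ to pass to an equivalent instance $(G',\tau',\Sigma,\alpha)$ in which $G'$ has at most one non-trivial component $G''$. Then by Lemma~\ref{lem:conred2} the optimal value equals the minimum, over all $\beta$ in the span of the $\tau$-values of the isolated vertices of $G'$, of the cogirth of $M(G'',\tau'',\Sigma,\alpha+\beta)$. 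There are at most $2^t$ such shifts $\beta$, so we produce at most $2^t$ connected subinstances. Each such reduction is linear-time.

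For each of the resulting connected subinstances, I would first test feasibility using the characterization via the incidence matrix, and discard infeasible ones. (If every subinstance is infeasible the original instance is infeasible and we return $\infty$.) For each feasible connected subinstance with at most $n$ vertices and $m$ edges, I would invoke the Random Contraction Algorithm $c n^4$ times with $c := \lceil \frac{1}{24} \ln(2^t/\epsilon) \rceil$, keeping the smallest cocycle returned. By Theorem~\ref{thm:paritycutsv2} the probability that this fails to find a minimum cocycle of that subinstance is at most $e^{-24c} \le \epsilon / 2^t$. A union bound over the at most $2^t$ subinstances bounds the total failure probability by $\epsilon$, as required. Finally I would output the minimum over all subinstances of the best cocycle found.

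For the running time, the preprocessing is linear, one invocation of the Random Contraction Algorithm costs $O(nm)$, and we perform $O(c\, n^4) = O(n^4)$ invocations per subinstance (with $t$ and $\epsilon$ treated as fixed, $c$ is a constant). Multiplying by the at most $2^t$ subinstances gives an overall running time of $O(2^t c\, n^5 m) = O(n^5 m)$, absorbing the $t$- and $\epsilon$-dependent factors into the implicit constant.

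Essentially every ingredient is already in place, so there is no real obstacle; the only point that requires a little care is bookkeeping the error probability across the $2^t$ subinstances via the union bound (hence the $\log(2^t/\epsilon)$ factor in $c$), and noting that feasibility of subinstances must be tested rather than assumed so that Theorem~\ref{thm:paritycutsv2} can be applied verbatim.
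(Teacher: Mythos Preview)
Your proposal is correct and follows precisely the approach the paper intends: the paper itself merely states that ``combining the connectivity reduction with Theorem~\ref{thm:paritycutsv2} gives'' the result, and you have faithfully filled in the details (the $2^t$ connected subinstances from Lemmas~\ref{lem:conred1}--\ref{lem:conred2}, the feasibility check, the union bound with $c=\lceil \tfrac{1}{24}\ln(2^t/\epsilon)\rceil$, and the $O(n^5m)$ time accounting).
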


\section{Perturbations of graphic matroids}

In this section we will see how to ``represent'' perturbations of graphic 
matroids by certain labelled graphs.
Let $s$ and $t$ be non-negative integers.
An {\em $(s,t)$-signed-graft} is a tuple
$(G,S,T,B,C,D)$ such that:
\begin{itemize}
\item $G$ is a graph,
\item $S$ is an $s$-element set disjoint from $V(G)$,
\item $T$ is a $t$-element set disjoint from $E(G)$, 
\item $B\in \GF(2)^{V(G)\times T}$,
\item $C\in \GF(2)^{S\times E(G)}$, and
\item $D\in\GF(2)^{S\times T}$.
\end{itemize}
The {\em incidence matrix} of an $(s,t)$-signed-graft $(G,S,T,B,C,D)$
is the matrix 
$$
A = \bordermatrix{ & E(G) & T \cr  S& C & D \cr V(G) & A(G) & B},
$$
where $A(G)$ is the incidence matrix of $G$.
We denote the matroid $M(A)$ by $M(G,S,T,B,C,D)$.
The following result is well-known but does not, to the best of our knowledge, appear in print.

\begin{lemma}\label{lem:decomposition}
Let $G$ be a graph and let $P\in\GF(2)^{V(G)\times E(G)}$
be a rank-$t$ matrix. Then there is a $(t,t)$-signed-graft
$(G,S,T,B,C,D)$ such that
$$ M(A(G)+P) = M(G,S,T,B,C,D)\con T.$$
\end{lemma}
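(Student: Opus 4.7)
The plan is to construct the desired signed-graft directly from a rank factorization of $P$, with $D$ chosen to be an identity matrix so that the contraction of $T$ reads off $A(G)+P$ after one pass of row operations.

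Using $\rank(P)\le t$, I would write $P = B\widetilde{C}$, where $B\in\GF(2)^{V(G)\times T}$ and $\widetilde{C}\in\GF(2)^{T\times E(G)}$, for some set $T$ of size $t$ disjoint from $E(G)$. I would then introduce a disjoint set $S$ of size $t$ together with a bijection $\phi:S\to T$, set $D\in\GF(2)^{S\times T}$ to be the matrix of $\phi$ (so $D=I_t$ after suitable identification), and let $C\in\GF(2)^{S\times E(G)}$ be $\widetilde{C}$ with rows relabelled via $\phi$. The tuple $(G,S,T,B,C,D)$ is then a $(t,t)$-signed-graft, and its incidence matrix is
$$A = \bordermatrix{ & E(G) & T \cr  S & C & I_t \cr V(G) & A(G) & B}.$$

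To verify $M(A)\con T = M(A(G)+P)$, I would perform the elementary row operations that, for every $v\in V(G)$ and $s\in S$, add $B_{v,s}$ times the $s$-row of $A$ to the $v$-row. These leave the $S$-rows unchanged, zero out the $B$-block in the $T$-columns, and transform the $(V(G),E(G))$-block $A(G)$ into $A(G)+BC=A(G)+P$. The resulting matrix $A'$ satisfies $M(A')=M(A)$, and the $T$-columns of $A'$ are now the standard basis of $\GF(2)^S$ supported entirely in the $S$-rows. Consequently $T$ is independent in $M(A')$ and contracting it amounts to deleting the $S$-rows together with the $T$-columns, leaving precisely the matrix $A(G)+P$ on $V(G)\times E(G)$.

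The construction and verification are essentially routine linear algebra, so I do not anticipate a genuine obstacle; the only care needed is the bookkeeping of how $S$, $T$, and the factorization of $P$ interact, and this is handled cleanly by taking $D=I_t$.
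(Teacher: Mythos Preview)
Your proposal is correct and follows essentially the same approach as the paper: factor $P=BC$ and take $D$ to be an identity so that contracting $T$ after the obvious row operations leaves $A(G)+P$. The only cosmetic difference is that the paper takes $S=T$ (a single $t$-set disjoint from both $V(G)$ and $E(G)$) and sets $D=-I$ (which equals $I$ over $\GF(2)$), whereas you keep $S$ and $T$ disjoint and carry a bijection $\phi$; this extra bookkeeping is unnecessary since the definition of an $(s,t)$-signed-graft does not require $S$ and $T$ to be disjoint from each other.
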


\begin{proof}
Let $S$ be a $t$-element set disjoint from both $V(G)$ and $E(G)$.
We can find matrices $B\in \GF(2)^{V(G)\times S}$ and $C\in\GF(2)^{S\times E(G)}$ 
such that
$$ P = BC.$$
Now the incidence matrix of the
$(t,t)$-signed-graft $(G,S,S,B,C,-I)$ is 
$$A = \bordermatrix{ & E(G) & S \cr  S& C & -I \cr V(G) & A(G) & B}.$$
Note that $A$ is equivalent, up to row operations, to
$$\bordermatrix{ & E(G) & S \cr  S& C & -I \cr V(G) & A(G)+ P & 0}.$$
Thus $M(A)\con S = M(A(G)+P)$, as required.
\end{proof}

Thus, given an $(s,t)$-signed-graft $(G,S,T,B,C,D)$, we are interested in
computing the girth and the cogirth of $M(G,S,T,B,C,D)\con T$.
For computing cogirth, we can reduce the problem to instances with $s=1$.
\begin{lemma}\label{lem:reduces}
Let $(G,S,T,B,C,D)$ be an $(s,t)$-signed-graft and let 
$S'$ be a one-element set disjoint from $V(G)$.
The cogirth of $M(G,S,T,B,C,D)\con T$ is the minimum of
the cogirths of the matroids
$M(G,S',T,B,yC,yD)\con T$ taken over all
vectors $y\in \GF(2)^{S'\times S}$.
\end{lemma}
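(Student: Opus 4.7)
The plan is to reduce the claim to a set-equality statement: show that as $y$ ranges over $\GF(2)^{S'\times S}$, the union of the cocycle-collections of the matroids $M(G,S',T,B,yC,yD)\con T$ equals the cocycle-collection of $M(G,S,T,B,C,D)\con T$. Once that set-equality is in place, taking minima of non-empty sets on both sides immediately gives the claimed identity of cogirths.

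To make the set-equality concrete, first I would write the cocycles of $M(A)\con T$ in coordinates. Since a cocycle of $M(A)\con T$ is just a cocycle of $M(A)$ contained in $E(G)$, it is the support of a row-vector of the form
\[
a_S^{\top} C + a_V^{\top} A(G)
\qquad\text{subject to}\qquad
a_S^{\top} D + a_V^{\top} B = 0,
\]
for some $a_S\in\GF(2)^{S}$ and $a_V\in\GF(2)^{V(G)}$. Doing the same bookkeeping for the incidence matrix $A_y$ of $(G,S',T,B,yC,yD)$, a cocycle of $M(A_y)\con T$ is the support of
\[
b_{S'}\cdot yC + b_V^{\top} A(G)
\qquad\text{subject to}\qquad
b_{S'}\cdot yD + b_V^{\top} B = 0,
\]
where $b_{S'}\in\GF(2)$ (since $|S'|=1$) and $b_V\in\GF(2)^{V(G)}$.

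With these formulas in hand, the two containments are now straightforward. For one direction, any cocycle of $M(A_y)\con T$ is turned into a cocycle of $M(A)\con T$ by taking $a_S^{\top}=b_{S'}\cdot y$ and $a_V=b_V$; one checks that the constraint and the $E(G)$-part match term-by-term. For the reverse direction, given a cocycle of $M(A)\con T$ with certificate $(a_S,a_V)$: if $a_S=0$ choose $y=0$, $b_{S'}=0$, $b_V=a_V$, and if $a_S\neq 0$ choose $y=a_S^{\top}$, $b_{S'}=1$, $b_V=a_V$; in either case the resulting vector has the same support.

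I don't foresee a serious obstacle — the argument is essentially just linear algebra combined with the contraction interpretation from Lemma~\ref{lem:decomposition}. The only place to be careful is the bookkeeping around $b_{S'}\in\{0,1\}$: one must remember that when $b_{S'}=0$ the constraint on $y$ is vacuous, so the cocycles coming from $b_{S'}=0$ are the same for every $y$ and reproduce exactly the cocycles of $M(A)\con T$ with $a_S=0$. That subtlety aside, the two containments glue together to give the desired set-equality, and the statement about cogirths follows by minimizing over non-empty cocycles on each side.
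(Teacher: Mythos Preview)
Your proposal is correct and follows essentially the same route as the paper: the paper's proof simply records the two bullet points that every cocycle of $M(G,S',T,B,yC,yD)$ is a cocycle of $M(G,S,T,B,C,D)$ and conversely every cocycle of the latter arises for some $y$, then says ``the result now follows easily.'' Your version is the same argument with the row-space bookkeeping written out explicitly (and carried out directly for $M(A)\con T$ rather than for $M(A)$), so there is nothing materially new or different here.
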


\begin{proof}
Note that:
\begin{itemize}
\item each cocyle of $M(G,S',T,B,yC,yD)$ is a cocycle of $M(G,S,T,B,C,D)$, and
\item for each cocycle $C^*$ of $M(G,S,T,B,C,D)$, there exists $y\in \GF(2)^{S'\times S}$,
such that $C^*$ is also a cocycle of $M(G,S',T,B,yC,yD)$.
\end{itemize}
The result now follows easily.
\end{proof}
Given Lemma~\ref{lem:reduces}, we can take the cogirth problem
given by an $(s,t)$-signed-graft and reduce it to $2^s$ 
cogirth problems on $(1,t)$-signed-grafts.
Moreover, given a $(1,t)$-signed-graft $(G,S,T,B,C,D)$,
we can, by Lemma~\ref{formulation1}, formulate
the problem of computing the girth of $M(G,S,T,B,C,D)\con T$
as a $t$-Dimensional Even-Cut Problem. Now, combining
Theorem~\ref{thm:paritycutsv3} with Lemmas~\ref{lem:decomposition}
and~\ref{lem:reduces}, we obtain Theorem~\ref{thm:main2}.

Analogous with Lemma~\ref{lem:reduces},
the girth problem given by an $(s,t)$-signed-graft reduces to $2^t$ 
girth problems on $(s,1)$-signed-grafts.
\begin{lemma}\label{lem:reducet}
Let $(G,S,T,B,C,D)$ be an $(s,t)$-signed-graft and let 
$T'$ be a one-element set disjoint from $E(G)$.
The girth of $M(G,S,T,B,C,D)\con T$ is the minimum of
the girths of the matroids
$M(G,S,T',Bx,C,Dx)\con T'$ taken over all
vectors $x\in \GF(2)^{T\times T'}$.
\end{lemma}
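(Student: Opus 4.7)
The plan is to argue dually to the proof of Lemma~\ref{lem:reduces}, replacing cocycles by cycles. Recall that, for a binary matroid $M(A)$, the cycles are exactly the sets of columns of $A$ summing to zero, and the cycles of $M(A)\con T$ are precisely the sets $F \subseteq E(G)$ such that $F \cup T_0$ is a cycle of $M(A)$ for some $T_0 \subseteq T$. Since $|T'| = 1$, a vector $x \in \GF(2)^{T \times T'}$ is naturally identified with the characteristic vector of a subset $T_0 \subseteq T$, via the single column of $x$.

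Writing the incidence matrix $A$ of $(G,S,T,B,C,D)$ in block form with row blocks $S$, $V(G)$ and column blocks $E(G)$, $T$, the condition that $F \cup T_0$ is a cycle of $M(A)$ unpacks to the pair of equations $C\chi_F + Dx = 0$ and $A(G)\chi_F + Bx = 0$, where $x = \chi_{T_0}$. These are exactly the equations expressing that $F \cup T'$ is a cycle of the incidence matrix $A_x$ of $(G,S,T',Bx,C,Dx)$. The degenerate case $T_0 = \emptyset$ corresponds to $F$ alone being a cycle of $M(A_x)$, and in that subcase the equations $C\chi_F = 0$ and $A(G)\chi_F = 0$ do not involve $x$ at all, so any choice of $x$ is valid.

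From this unpacking two inclusions follow, mirroring the two bullet points in the proof of Lemma~\ref{lem:reduces}. First, every non-empty cycle of $M(A_x)\con T'$ is a non-empty cycle of $M(A)\con T$: take $T_0$ to be the support of $x$ if the cycle uses $T'$, and $T_0 = \emptyset$ otherwise. Secondly, every non-empty cycle $F$ of $M(A)\con T$ arises from some $x \in \GF(2)^{T \times T'}$, namely $x = \chi_{T_0}$ for a witnessing $T_0$. Comparing sizes of shortest non-empty cycles on each side yields the claimed equality of girths. There is no substantive obstacle in this proof; the only point requiring care is the bookkeeping between the subcases $T_0 = \emptyset$ and $T_0 \neq \emptyset$ when translating cycles of $M(A_x)\con T'$ back into cycles of $M(A)\con T$.
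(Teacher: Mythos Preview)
Your proposal is correct and follows essentially the same approach as the paper's proof: both establish the two-way correspondence between cycles of $M(A)\con T$ and cycles of $M(A_x)\con T'$ over all $x$, with the key observation that the equations $C\chi_F + Dx = 0$ and $A(G)\chi_F + Bx = 0$ simultaneously encode ``$F\cup T_0$ is a cycle of $M(A)$'' and ``$F\cup T'$ is a cycle of $M(A_x)$''. The paper phrases this slightly more abstractly (lifting to cycles $W'$ in the uncontracted matroids) while you unpack the block-matrix equations explicitly, but the content is the same.
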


\begin{proof}
If $W$ is a cycle of 
$M(G,S,T,B,C,D)\con T$ then
there exists a cycle $W'$ of $M(G,S,T,B,C,D)$
with $W\subseteq W' \subseteq W\cup T$.
If $x\in \GF(2)^{T\times T'}$ is the characteristic vector
of $W'\cap T$, then 
$W$ is also a cycle of $M(G,S,T',Bx,C,Dx)\con T'$.

Conversely, suppose that $x\in \GF(2)^{T\times T'}$
and that $W$ is a cycle of $M(G,S,T',Bx,C,Dx)\con T'$.
There is a cycle $W'$ of $M(G,S,T',Bx,C,Dx)$ with
$W\subseteq W' \subseteq W\cup T'$.
Note that $|T'|=1$; suppose that $T'=\{v\}$. If $v\not\in W'$ then $W'=W$, so
$W$ is a cycle of $M(G,S,T,B,C,D)$ and, hence,
also of $M(G,S,T,B,C,D)\con T$.
We may therefore suppose that $v\in W'$. Let $X\subseteq T$ such that
$x$ is the characteristic vector of $X$. Now
$W\cup X$ is a cycle of $M(G,S,T,B,C,D)$ and, hence,
$W$ is a cycle of $M(G,S,T,B,C,D)\con T$.
\end{proof}

It remains to find an algorithm that, given an $(s,1)$-signed-graft
$(G,S,T,B,C,D)$, computes the girth of
$M(G,S,T,B,C,D)\con T$.

\section{Perfect matching with parity constraints}

The problem of computing the girth in a perturbed graphic matroid will
be reduced to a minimum $T$-join problem with parity constraints;
that problem, in turn, reduces to a minimum-weight perfect matching problem
with parity constraints. We will solve that matching problem in this section.
Before stating the problem precisely, we need some notation.

Let $G$ be a graph with edge-weights $(w(e)\, :\, e\in E(G))$ in some 
commutative ring (usually $\Z$ or $\GF(2)^t$) and let $M\subseteq E(G)$.
We denote the sum $\sum (w(e)\, : \, e\in M)$ by $w(M)$.

\medskip

{\it 
\noindent
{\bf The $t$-Dimensional Parity Perfect Matching Problem}\\[1mm]
{\sc Instance:} A graph $G$, unary edge-weights  $w : E(G) \rightarrow \Z_{\geq 0}$,
edge-parities $\gamma : E(G) \rightarrow \GF(2)^t$, and a parity demand $\alpha \in \GF(2)^t$.\\
{\sc Problem:} Find a perfect matching $M$ of $G$ minimizing $w(M)$ subject to $\gamma(M) = \alpha$.
}

\medskip

We note that the graphs we are considering may have parallel edges, 
although we may assume that each parallel class contains at most $2^t$ edges, one for each element of $\GF(2)^t$.
Our solution is closely related to the randomized algorithm, of Mulmuley, Vazirani, and Vazirani~\cite{MVV},
for the Exact Matching Problem.

The running time of our algorithm is $\O(w_{\max} n^7 \log^2 n)$ where
$n=|V(G)|$ and $w_{\max}=\max(w(e)\, : \, e\in E(G))$.
Due to the dependence on $w_{max}$ this is not a polynomial-time 
algorithm, but it suffices for our intended application.

\subsection*{The Tutte matrix}
We review the related concepts of Tutte matrices and Pfaffians that we will use to solve the $t$-Dimensional Parity Perfect Matching Problem.
Let $G=(V,E)$ be a graph with $V=\{1,\ldots,n\}$.
Let $x=(x_e\, : \, e\in E)$ be a collection of algebraically independent commuting
indeterminates over $\bR$. The graph $G$ need not be simple; for vertices $u,v\in V$,
we let $E_{uv}$ denote the set of edges with $u$ and $v$ as its ends.
The {\em Tutte matrix} of $G$ is the matrix $T(x)=(t_{uv})_{V\times V}$ where,
for $u,v\in V$,
$$
t_{uv} = \left\{ \begin{array}{ll}
\sum_{e\in E_{uv}} x_e, & u<v \\
-\sum_{e\in E_{uv}} x_e, & u>v \\
0 & u=v.
\end{array}
\right.
$$
Note that $T(x)$ is skew-symmetric; that is, $T(x)$ is equal to the negative of its transpose.
The {\em Pfaffian} of a skew-symmetric matrix $A$, denoted $\Pf(A)$, is a
square-root of its determinant; the Pfaffian of $A$ has an expansion that
is analogous to the permutation expansion of a determinant; see Godsil~\cite{god}.

Let $M$ be a perfect matching of $G$.
We denote the product of $(x_e\, :\, e\in M)$ by $x^M$.
Let $e=u_1u_2$ and $f=v_1v_2$ be edges of $G$ with
$u_1\le u_2$ and $v_1\le v_2$. We say that $e$ and $f$ {\em cross} if
either $u_1<v_1<u_2<v_2$ or $v_1<u_1<v_2<u_2$.
The {\em sign} of $M$, denoted $\sigma_M$, is $(-1)^k$ where $k$ is the number 
of pairs of edges in $M$ that cross. Tutte~\cite{tutte}
observed that
$$
\Pf(T(x)) = \sum_M \sigma_M x^M, 
$$
where the sum is taken over the set of all perfect matchings $M$ of $G$.

The Pfaffian of $T(x)$ is in the ring $\Z[x]$ of polynomials. We will 
extend this by additional indeterminates $z$ and $y=(y_1,\ldots,y_t)$
where $x$, $y$, and $z$ are all algebraically independent and commute.
Now we define the quotient ring
\[ R = \Z[x,y, z] / \langle y_1^2 - 1, \ldots, y_t^2 - 1 \rangle. \]
Since $y_i^2=1$, we will
consider the exponents of $y_i$ as elements of $\GF(2)$.
For $\rho\in \GF(2)^t$, we denote $y_1^{\rho_1}y_2^{\rho_2}\cdots y_t^{\rho_t}$
by $y^{\rho}$.

Let $(G,w,\gamma,\alpha)$ be an instance of $t$-Dimensional Parity Perfect Matching Problem.
We may assume that $V(G)=\{1,\ldots,n\}$. Let $T(x)$ be the Tutte matrix of $G$.
Now we define the {Tutte matrix} of $(G,w,\gamma,\alpha)$ to be the
matrix $T(x,y,z)$ over $R$ obtained from $T(x)$ by replacing each indeterminate
$x_e$ with $x_e y^{\gamma(e)}z^{w(e)}$. Thus
$$
\Pf(T(x,y,z)) = \sum_M \sigma_M x^M y^{\gamma(M)}z^{w(M)}, 
$$
where the sum is taken over the set of all perfect matchings $M$ of $G$.
Now, by collecting like terms we can define a collection of polynomials
$(p_{\beta}(x,z)\, : \, \beta\in \GF(2)^t)$ such that
$$ \Pf(T(x,y,z)) = \sum_{\beta\in \GF(2)^t} p_{\beta}(x,z)y^{\beta}.$$
Given a polynomial $p(x,z)$ in $\Z[x,z]$, we denote the minimum exponent of $z$ among all terms of 
the polynomial $p(x,z)$ by $\mindeg_z(p(x,z))$; if $p(x,z)=0$ the we let $\mindeg_z(p(x,z))=\infty$.
The following lemma follows immediately from the definitions; we omit the proof.
\begin{lemma}\label{summary}
There is a perfect matching $M$ with $\gamma(M) = \alpha$ if and only if
$p_{\alpha}(x,z)\neq 0$. Moreover, if $p_{\alpha}(x,z)\neq 0$, then
$\mindeg_z(p_{\alpha}(x,z))$ is the minimum of $w(M)$
taken over all perfect matchings $M$ with $\gamma(M)=\alpha$.
\end{lemma}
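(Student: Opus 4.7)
The plan is to start from the Pfaffian expansion
\[ \Pf(T(x,y,z)) = \sum_M \sigma_M x^M y^{\gamma(M)} z^{w(M)} \]
displayed just before the lemma, where the sum is over all perfect matchings of $G$, and simply collect like terms in $y$. Since the ring $R = \Z[x,y,z]/\langle y_1^2 - 1, \ldots, y_t^2 - 1\rangle$ is free as a $\Z[x,z]$-module on the basis $\{y^\beta : \beta \in \GF(2)^t\}$, the defining identity $\Pf(T(x,y,z)) = \sum_\beta p_\beta(x,z)\, y^\beta$ lets us match coefficients of $y^\alpha$ on both sides, yielding
\[ p_\alpha(x,z) \;=\; \sum_{M :\, \gamma(M) = \alpha} \sigma_M\, x^M z^{w(M)}, \]
where $M$ ranges over perfect matchings of $G$.

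The crucial step is to observe that no cancellation can occur among the terms in this sum. The indeterminates $(x_e : e \in E(G))$ are algebraically independent and commuting, so for any two distinct perfect matchings $M \neq M'$ the monomials $x^M = \prod_{e \in M} x_e$ and $x^{M'} = \prod_{e \in M'} x_e$ are distinct. Hence the terms $\sigma_M x^M z^{w(M)}$ appearing in $p_\alpha(x,z)$ are supported on pairwise distinct monomials in $x$, and in particular none of them cancels against another (the sign $\sigma_M \in \{\pm 1\}$ is irrelevant for this).

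Both conclusions of the lemma now drop out. The polynomial $p_\alpha(x,z)$ is zero if and only if the index set $\{M : M$ a perfect matching with $\gamma(M) = \alpha\}$ is empty, which proves the equivalence in the first sentence. When this set is non-empty, each $M$ in it contributes a term whose $z$-exponent is exactly $w(M)$, and since those terms live on distinct $x$-monomials they survive independently, so
\[ \mindeg_z\bigl(p_\alpha(x,z)\bigr) \;=\; \min\bigl\{\,w(M)\;:\;M\text{ a perfect matching of }G,\ \gamma(M)=\alpha\bigr\}, \]
which is the second assertion.

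I do not expect any real obstacle: once the Pfaffian expansion and the freeness of $R$ over $\Z[x,z]$ are in hand, the argument is purely bookkeeping. The only point deserving explicit mention is the non-cancellation of terms, and this is immediate from the algebraic independence of the $x_e$.
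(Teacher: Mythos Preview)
Your proof is correct and is exactly the argument the paper has in mind; the paper in fact omits the proof entirely, stating that the lemma ``follows immediately from the definitions.'' Your explicit observation that distinct perfect matchings $M$ yield distinct monomials $x^M$, so that no cancellation occurs in $p_\alpha(x,z)$, is the one point worth spelling out, and you handle it cleanly.
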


\subsection*{Evaluations}
Let $p$ be a polynomial in $\Z[x_1,\ldots,x_m]$. If $p(x)\neq 0$, then we are unlikely to
get $p(\tilde x)=0$ if we choose $\tilde x\in \Z^{m}$ ``at random".
We start by making this precise. The {\em degree} of $p$ is the maximum,
taken over all terms of $p$, of the sum of the exponents of $x_1,\ldots,x_m$ in the term.
The following result was proved independently by Schwartz~\cite{Schwartz} and Zippel~\cite{Zippel}.

\begin{lemma}[Schwartz-Zippel Lemma] \label{lem:schwartzzippel}
 Let $p$ be a non-zero polynomial in $\Z[x_1,\ldots,x_m]$ with degree at most $d$, and let $S$ be a finite subset of $\Z$. 
 If $\tilde x\in S^{m}$ is chosen randomly, with uniform  probability, then $p(\tilde x) \neq 0$ with 
 probability at least $1 - \frac{d}{|S|}$.
\end{lemma}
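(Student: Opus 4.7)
The plan is to prove the Schwartz-Zippel Lemma by induction on the number of variables $m$, which is the standard approach and seems unavoidable here.

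For the base case $m = 1$, the polynomial $p(x_1)$ is a non-zero univariate polynomial of degree at most $d$, so it has at most $d$ roots in $\Z$. Hence $p(\tilde x_1) = 0$ with probability at most $d/|S|$, giving the bound.

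For the inductive step, I would write $p$ as a polynomial in $x_1$ with coefficients in $\Z[x_2,\ldots,x_m]$, say
\[ p(x_1,\ldots,x_m) = \sum_{i=0}^k x_1^i \, q_i(x_2,\ldots,x_m), \]
where $k$ is the largest index with $q_k \not\equiv 0$. Then $q_k$ has degree at most $d-k$ in the variables $x_2,\ldots,x_m$. I would then split into two cases according to whether $q_k(\tilde x_2,\ldots,\tilde x_m)$ vanishes. By the inductive hypothesis applied to $q_k$, the event $q_k(\tilde x_2,\ldots,\tilde x_m) = 0$ occurs with probability at most $(d-k)/|S|$. Conditioned on $q_k(\tilde x_2,\ldots,\tilde x_m) \neq 0$, the polynomial $p(x_1,\tilde x_2,\ldots,\tilde x_m)$ is a non-zero univariate polynomial in $x_1$ of degree exactly $k$, so by the base case applied at $\tilde x_1$, it vanishes with conditional probability at most $k/|S|$.

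Combining via a union bound, the total probability that $p(\tilde x) = 0$ is at most $(d-k)/|S| + k/|S| = d/|S|$, as required. There is no real obstacle here: the only subtlety is the bookkeeping of degrees (so that $q_k$ genuinely has degree at most $d-k$, allowing the induction to close), and the careful conditioning in the union bound argument. Everything else is routine.
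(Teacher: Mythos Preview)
Your proof is correct and is the standard inductive argument for the Schwartz--Zippel Lemma. Note, however, that the paper does not actually prove this lemma: it merely states it with attribution to Schwartz and Zippel, so there is no in-paper proof to compare against.
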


Let $n= |V(G)|$ and let $c$ be a positive integer.
We will choose an $\tilde x\in \{1,\ldots,c n\}^{E(G)}$. Note that
$$ \Pf(T(\tilde x,y,z)) = \sum_{\beta\in \GF(2)^t} p_{\beta}(\tilde x,z)y^{\beta}.$$
Using the Schwartz-Zippel Lemma we get the following:
\begin{lemma} \label{getdegree}
Let $c$ be a positive integer. If $\tilde x\in \{1,\ldots,c|V(G)|\}$ is chosen randomly, with uniform  probability, 
then $\mindeg_z(p_{\alpha}(\tilde x,z)) = \mindeg_z(p_{\alpha}(x,z))$ with 
 probability at least $1 - \frac{1}{2c}$.
 \end{lemma}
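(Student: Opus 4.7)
My plan is to pass from $p_\alpha(x,z)$ to the single $x$-polynomial that ``carries'' the minimum $z$-degree, and then invoke the Schwartz--Zippel Lemma on it. First I would write $p_\alpha(x,z) = \sum_{i\ge 0} c_i(x)\, z^i$ with $c_i(x) \in \Z[x]$ and set $d = \mindeg_z(p_\alpha(x,z))$. The case $p_\alpha(x,z)=0$ is trivial, since then $p_\alpha(\tilde x,z)=0$ for every $\tilde x$ and both sides equal $\infty$, so I would assume $c_d(x)\ne 0$. The key observation is that, for any $\tilde x$, the identities $c_i(x)=0$ for $i<d$ force $c_i(\tilde x)=0$ for $i<d$; hence $\mindeg_z$ cannot drop, and preservation of $\mindeg_z$ is precisely the event $c_d(\tilde x)\ne 0$.

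Next I would read off $c_d(x)$ explicitly from the Pfaffian expansion $\Pf(T(x,y,z)) = \sum_M \sigma_M\, x^M y^{\gamma(M)} z^{w(M)}$ by collecting the coefficient of $y^\alpha z^d$:
$$
c_d(x) \;=\; \sum_{\substack{M\ \text{perfect matching}\\ \gamma(M)=\alpha,\ w(M)=d}} \sigma_M\, x^M.
$$
Each monomial $x^M$ is a product of exactly $|V(G)|/2$ distinct variables, so the total $x$-degree of $c_d$ is at most $|V(G)|/2$. Moreover, distinct matchings yield distinct monomials (even when $G$ has parallel edges, since each parallel edge is assigned its own variable $x_e$), so no cancellation occurs and $c_d$ really is a nonzero polynomial in $x$. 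Applying Lemma~\ref{lem:schwartzzippel} with $|S|=c|V(G)|$ then gives $\Pr[c_d(\tilde x)=0] \le \frac{|V(G)|/2}{c|V(G)|} = \frac{1}{2c}$, which is the claimed bound.

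I do not anticipate any substantial obstacle; everything rests on identifying the ``leading'' coefficient $c_d(x)$ and checking its degree. The only point that warrants a line of justification is the non-cancellation of terms in $c_d$, for which the distinctness of the squarefree monomials $x^M$ suffices.
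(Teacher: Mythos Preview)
Your proposal is correct and follows essentially the same route as the paper: extract the coefficient of the lowest power of $z$, observe it is a nonzero polynomial in $x$ of degree $\tfrac{1}{2}|V(G)|$, and apply Schwartz--Zippel with $|S|=c|V(G)|$. One minor remark: the non-cancellation argument you give for $c_d(x)\neq 0$ is pleasant but unnecessary, since $c_d(x)\neq 0$ already holds by the very definition of $d=\mindeg_z(p_\alpha(x,z))$; the paper simply uses this and does not discuss cancellation.
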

 
 \begin{proof}
 We may assume that $p_{\alpha}(x,z)\neq 0$ since otherwise the result is trivial.
 Let $k = \mindeg_z(p_{\alpha}(x,z))$. Collect $p_{\alpha}(x,z)$ in like powers of $z$ 
 and let $q(x)$ be the coefficient of $z^k$. Thus $q(x)\neq 0$ and
 $\mindeg_z(p_{\alpha}(\tilde x,z)) = \mindeg_z(p_{\alpha}(x,z))$ if and only if $q(\tilde x)\neq 0$.
 Now $q(x)$ has degree $\frac{1}{2}|V(G)|$, so, by the Schwartz-Zippel Lemma,
 $q(\tilde x)\neq 0$ with probability at least $1-\frac{1}{2c}$, as required.
 \end{proof}

Now the algorithm is obvious.

\medskip
{\it
\noindent
{\bf Random Evaluation Algorithm}\\[1mm]
{\sc Input:} An instance $(G,w,\gamma,\alpha)$ of the  
$t$-Dimension Parity Perfect Matching Problem
and a positive integer $c$.\\[1mm]
{\sc Step 1.} Choose $\tilde x\in\{1,\ldots,c|V(G)|\}^{E(G)}$ uniformly at random.\\
{\sc Step 2.} Compute $\Pf(T(\tilde x,y,z))$ and extract $p_{\alpha}(\tilde x,z)$.\\
{\sc Step 3.} Return $\mindeg_z(p_{\alpha}(\tilde x,z))$.
}
\medskip

By Lemmas~\ref{summary} and~\ref{getdegree},
the algorithm will give the correct answer with 
probability at least $1-\frac{1}{2c}$.

How does one compute $\Pf(T(\tilde x,y,z))$? The easy answer 
is to first compute it over the ring $\Z[y,z]$ before applying the quotient.  
This can be done by applying row and column eliminations;
see Godsil~\cite{god}.
The intermediate matrices would have entries in the 
field of rational functions in $y$ and $z$.
That would result in a running time
of $\O(w_{\max}n^{t+4})$.

Note that $\Pf(T(\tilde x,y,z))$ is in the ring 
$$ \widetilde R = \Z[y,z]/ \langle y_1^2 - 1, \ldots, y_t^2 - 1 \rangle.$$
We can reduce the complexity to $\O(w_{\max} n^7 \log^2 n)$
by computing $\Pf(T(\tilde x,y,z))$ in the ring $\widetilde R$; this is, however, 
not straightforward.

\subsection*{Computing the Pfaffian over a commutative ring}
Mahajan, Subramanya, and Vinay \cite{MahajanSubramanyaVinay} give an algorithm
for computing Pfaffians over arbitrary commutative rings; here we will only focus on
the ring $\widetilde R$. Their algorithm takes polynomially-many ring operations. 
We need to prove that if we use it to
compute $\Pf(T(\tilde x,y,z))$ then each of these ring operations can be done efficiently, 
so that the whole algorithm runs in polynomial time. 

Let $D$ be an $n \times n$ skew-symmetric matrix over the ring $\widetilde R$. Given $D$, Mahajan {\em et al.} 
construct a directed graph $H_D$ with edge weights $\wt(e)$ in $\widetilde R$ and with three 
special vertices $s$, $t^-$, and $t^+$. The construction of
$H_D$ and $\wt$ is given explicitly in the Appendix;
here we will only summarize the properties that are relavent to the subsequent discussion. 
\begin{enumerate}
\item[(H1)] $H_D$ is an acyclic digraph with $2n^3+3$ vertices,
\item[(H2)] each vertex in $H_D$ has in-degree at most $n$, 
\item[(H3)] each directed path in $H_D$ has at most $n+1$ edges, and
\item[(H4)] the weight of each edge is either equal to an entry of $D$ or to one.
 \end{enumerate}

Given a directed path $P$ in $H_D$, we define $\wt^P = \prod_{e \in E(P)} \wt(e)$. 
For vertices $x$ and $y$ of $H_D$, let $\cP(x, y)$ denote the set of all directed $(x, y)$-paths.
They prove the following.

\begin{theorem}[{\cite[Theorem 12]{MahajanSubramanyaVinay}}] \label{thm:pfaffianissumofpaths}
 Let $D$ be a skew-symmetric matrix and $H_D$ the graph described in the Appendix. Then
 \[ \Pf(D) = \sum_{P \in \cP(s, t^+)} \wt^P - \sum_{Q \in \cP(s, t^-)} \wt^Q. \]
\end{theorem}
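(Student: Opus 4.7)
The plan is to read the directed $s$-to-$t^\pm$ paths in $H_D$ as a combinatorial encoding of ``attempted perfect matchings'' on $\{1,\ldots,n\}$, with the terminal label $t^+$ versus $t^-$ recording an accumulated sign, so that the signed difference collapses to Tutte's Pfaffian expansion
\[ \Pf(D) = \sum_M \sigma_M \prod_{ij\in M} d_{ij}, \]
where $M$ ranges over the perfect matchings of the complete graph on $\{1,\ldots,n\}$ (pairs $ij$ with $i<j$).

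First I would unpack the construction of $H_D$ from the Appendix to establish a dictionary between directed paths and sequences of ``pair-selection'' operations. Properties (H3) and (H4) force any $s$-to-$t^\pm$ path to consist of exactly $n/2$ pair-selection edges (each contributing a matrix entry $d_{ij}$) interleaved with weight-$1$ bookkeeping edges, and the $2n^3+3$ vertex count of $H_D$ is consistent with states storing a bounded amount of information per stage (for instance, the currently selected pair together with the smallest index available for future pair-selections). Using this dictionary, I would identify which paths correspond to ``legitimate'' pair sequences, namely those whose pair-selection edges actually form a perfect matching of $\{1,\ldots,n\}$.

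Second, I would check that each perfect matching $M$ is represented by a canonical family of paths in $H_D$ whose products of edge weights sum to $\prod_{ij\in M} d_{ij}$, and whose terminal vertex is $t^+$ if $\sigma_M=+1$ and $t^-$ if $\sigma_M=-1$. This identifies the legitimate contributions to $\sum_P \wt^P - \sum_Q \wt^Q$ exactly with the Pfaffian expansion.

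Third, and this is the heart of the proof, I would construct a weight-preserving, sign-reversing involution $\phi$ on the set of ``illegitimate'' paths --- those whose pair-selection edges double-use a vertex, or otherwise fail to form a matching. Following the Mahajan--Vinay paradigm from their determinant algorithm, $\phi$ would locate the earliest violation along the path and perform a local toggle (for instance, swapping the roles of two pair-selection edges, or reassigning which vertex of a repeated pair is ``active''). The design of $H_D$ must ensure that the toggled path is again a legal $s$-to-$t^\pm$ path of the same weight, but with its terminal flipped between $t^+$ and $t^-$. Illegitimate paths therefore cancel in pairs in the signed sum, leaving only the Pfaffian terms.

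The main obstacle is verifying that $\phi$ is actually a well-defined involution on $H_D$: one has to confirm that the ``first violation'' is canonically locatable from the path data alone, that the toggled edges are present in $H_D$ with the required weights, and that the bookkeeping layers are set up so that the toggle is precisely what swaps $t^+ \leftrightarrow t^-$. This is exactly the combinatorial content of the explicit layered construction of $H_D$ in~\cite{MahajanSubramanyaVinay} and cannot be bypassed.
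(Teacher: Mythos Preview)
The paper does not prove this theorem at all: it is quoted verbatim as \cite[Theorem~12]{MahajanSubramanyaVinay} and used as a black box. The only original content in the paper surrounding this statement is the explicit description of $H_D$ in the Appendix and the verification of properties (H1)--(H4); the identity $\Pf(D)=\sum_{P}\wt^P-\sum_{Q}\wt^Q$ itself is simply imported. So there is no ``paper's own proof'' to compare your proposal against.

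That said, your outline is the right shape for how the cited reference actually argues: the first coordinate $b\in\{0,1\}$ of a vertex $(b,i,j,k)\in X$ is precisely the running sign bit, the layer index $k$ counts steps, the type-(2) edges are the ones carrying matrix entries, and the involution on ``illegitimate'' paths is what cancels the non-matching terms. Your reading of the $2n^3+3$ count as ``current pair plus smallest available index'' is slightly off --- the state is literally (sign bit, two indices, layer) --- but the spirit is correct. You are also right that the argument cannot be completed without the explicit edge list: the whole point of the construction in~\cite{MahajanSubramanyaVinay} is to arrange the bookkeeping edges so that the first-violation toggle is a genuine involution on paths in $H_D$, and verifying this is where all the work lies. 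Since the present paper does not reproduce that verification, neither should you; a citation suffices.
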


Using this result, we will show that $\Pf(T(\tilde x,y,z))$ can be computed efficiently.
For an element $r$ of $\widetilde{R}$ we write $\deg_z(r)$ for the degree of $r$ in the variable $z$ and $|r|$ for the largest absolute value of its integer coefficients.
We assume that multiplying two integers $a$ and $b$ takes time $\O(\log a \log b)$ and that adding them takes time $\O(\max\{\log a, \log b\})$.

\begin{lemma} \label{lem:timetocomputepfaffian}
Let $t$ be a fixed non-negative integer.
Now let $D = (d_{ij})$ be an $n \times n$ matrix over $\widetilde{R}$, $k = \max\{\deg_z(d_{ij})\, :\,1 \leq i,j \leq n\}$, and $c = \max\{|d_{ij}|\, : \,1 \leq i,j \leq n\}$. If each entry of $D$ has at most $2^t$ terms and $c \leq n$, then we can compute $\Pf(D)$ in $\O(n^7 k \log^2 n)$ time.
\end{lemma}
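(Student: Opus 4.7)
The plan is to apply Theorem~\ref{thm:pfaffianissumofpaths} and evaluate the two sums over paths in $H_D$ by dynamic programming. I process the vertices of $H_D$ in topological order (well-defined by (H1)) and at each vertex $v$ store
\[ f(v) \;=\; \sum_{P \in \cP(s,v)} \wt^P \;\in\; \widetilde R, \]
starting from $f(s)=1$ and using the recurrence $f(v) = \sum_{u\to v} \wt(u,v)\, f(u)$. Property (H2) implies each vertex costs at most $n$ ring multiplications and $n$ ring additions in $\widetilde R$, so by (H1) the total number of ring operations is $\O(n\cdot |V(H_D)|)=\O(n^4)$; once $f(t^+)$ and $f(t^-)$ are computed, one further subtraction returns $\Pf(D)$.

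The main work is bounding the bit-cost of each ring operation. Every element of $\widetilde R$ we encounter can be written uniquely as $\sum_{\rho\in\GF(2)^t}g_\rho(z)y^\rho$ with $g_\rho\in\Z[z]$. By (H3) each $(s,v)$-path has at most $n+1$ edges, and by (H4) each edge weight has $z$-degree at most $k$, so $\deg_z f(v)\le(n+1)k$ and $f(v)$ is stored in $2^t\cdot\O(nk)=\O(nk)$ integer coefficients (recall $t$ is fixed). A single path weight $\wt^P$ is a product of at most $n+1$ factors each of $\ell_1$-norm at most $2^t c\le 2^t n$, so every integer coefficient in $\wt^P$ has absolute value at most $(2^tn)^{n+1}=n^{\O(n)}$. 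Since the number of $(s,v)$-paths in $H_D$ is at most $n^{n+1}$ by (H2) and (H3), the integer coefficients of $f(v)$ remain of magnitude $n^{\O(n)}$, i.e.\ of bit-size $\O(n\log n)$.

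To cost one ring multiplication $\wt(u,v)\, f(u)$, note that by (H4) the factor $\wt(u,v)$ is either $1$ or an entry $d_{ij}$ with $\O(1)$ terms, each with coefficient of bit-size $\O(\log n)$. The product is then computed term-by-term (with the reduction $y_i^2\mapsto 1$): this is $\O(nk)$ integer multiplications between an $\O(n\log n)$-bit integer and an $\O(\log n)$-bit one, plus $\O(nk)$ additions of $\O(n\log n)$-bit integers. Under the schoolbook-multiplication assumption each such integer product takes $\O(n\log^2 n)$, so a single ring multiplication costs $\O(n^2k\log^2 n)$, and ring addition costs no more. Combining with the $\O(n^4)$ ring operations from the first step gives a total running time of $\O(n^6k\log^2 n)$, comfortably within the claimed $\O(n^7k\log^2 n)$.

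The main obstacle is the coefficient-size accounting: because partial sums carry integer coefficients of bit-length linear in $n$, one must verify carefully that every arithmetic step, including the modular reduction by $y_i^2 - 1$, stays within the stated bounds and that the number of $(s,v)$-paths (which controls how many contributions a single coefficient accumulates) really is at most $n^{\O(n)}$. Once that bookkeeping is in place, the rest of the argument is a routine count based on properties (H1)--(H4) of $H_D$.
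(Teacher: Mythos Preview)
Your proposal is correct and follows essentially the same approach as the paper: compute $f(v)=\sum_{P\in\cP(s,v)}\wt^P$ by dynamic programming in topological order on $H_D$, bound $\deg_z f(v)$ via (H3)--(H4), bound the coefficient sizes via the path-count estimate $|\cP(s,v)|\le n^{\O(n)}$ from (H2)--(H3), and then tally the cost of each ring multiplication $\wt(u,v)\cdot f(u)$ as $\O(n^2k\log^2 n)$ to arrive at $\O(n^6k\log^2 n)$ overall. The paper's proof is virtually identical in structure and in the final bound; your write-up is slightly more explicit about the $y$-reduction and the $\ell_1$-norm bookkeeping, but there is no substantive difference.
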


\begin{proof}
Let $H_D$ be the directed graph associated with $D$ described in the Appendix.
For each node $v$ of $H_D$, let $f(v) = \sum_{P \in \cP(s, v)} \wt^P$. 
By Theorem~\ref{thm:pfaffianissumofpaths} we need only compute $f(t^+)$ and $f(t^-)$.
Since $H_D$ is acycilc, there is an ordering $(v_1,v_2,\ldots,v_{2n^3+3})$ such that
for each edge $e=v_iv_j$ of $H_D$ we have $i<j$.
We denote by $\delta^-(v)$ the set of vertices that are tails of arcs with head $v$.
Note that $f(v) = \sum_{u \in \delta^-(v)} f(u)\wt(uv)$ and for each $v_i\in \delta^-(v_j)$  we have $i<j$.
So this formula allows us to compute $f(v_1),\, f(v_2),\ldots,f(v_{2n^3+3})$ in that order.

By property (H4), for each edge $e$ we have $|\wt(e)| \leq c$ and
$\deg_z(\wt(e)) \leq k$. Then, by property (H3),
for each $v \in V(H_D)$, we have $\deg_z(f(v)) \leq (n+1)k$.
Moreover, by properties (H2) and (H3) we have 
$|\cP(s,v)|\le (n-1)^n$ and, hence, $|f(v)| \leq c^{n+1}(n+1)^{n}$.

To compute each $f(v)$, we first do $\O(n)$ multiplications of the form $f(u) \wt(uv)$. Since each $\wt(uv)$ has a constant number of terms, each multiplication takes time $\O(\deg_z(f_u) \log|f_u| \log|\wt(uv)|)$ or $\O(n^2 k \log(cn)\log c)$.
We then do $\O(n)$ additions of these terms, each of which takes time $\O(nk \log(c^{n+1}(n+1)^{n}))$ or $\O(n^2 k \log(cn))$. 
So computing each $f(v)$ takes time $\O(n^3 k \log(cn)\log c)$.

Since there are $\O(n^3)$ nodes in $H_D$ and $c \leq n$, the total time taken is $\O(n^6 k \log^2 n)$.
\end{proof}

Given an instance $(G,w,\gamma,\alpha)$ of $t$-Dimensional Parity Perfect Matching Problem, we may assume that,
for each $\beta\in\GF(2)^t$, there is at most one edge $e$ with $\gamma(e)=\beta$ between
any two given vertices, since otherwise we could delete all but the one of minimum weight.
This gives the following result.
\begin{theorem}\label{thm:matching}
Let $t$ be a non-negative integer and $\epsilon>0$.
There is a randomized algorithm that, given an instance of the 
$t$-Dimensional Parity Perfect Matching Problem
with $n$ vertices and maximum edge weight $w_{\max}$,
correctly solves the problem, with probability at least $1-\epsilon$,
in time $\O(w_{\max} n^6 \log^2 n)$.
\end{theorem}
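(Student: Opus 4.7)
The plan is to read the answer off the Pfaffian of the Tutte matrix $T(\tilde x,y,z)$ via the Random Evaluation Algorithm, appealing to Lemma~\ref{lem:timetocomputepfaffian} for the time bound and to Lemmas~\ref{summary} and~\ref{getdegree} for correctness. Concretely, I would set $c := \lceil 1/(2\epsilon)\rceil$ and execute the Random Evaluation Algorithm once with this choice. By Lemma~\ref{getdegree}, the substitution $\tilde x \in \{1,\dots,c n\}^{E(G)}$ preserves $\mindeg_z(p_\alpha)$ with probability at least $1-\tfrac{1}{2c}\ge 1-\epsilon$, and by Lemma~\ref{summary} this min-degree is exactly the optimum weight (with $\mindeg_z(0)=\infty$ handling infeasibility), so the success probability is immediate once the running time is established.

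Before invoking the Pfaffian subroutine I would run the linear-time preprocessing described just above the theorem: for each unordered pair $\{u,v\}$ and each $\beta\in\GF(2)^t$, keep only a single minimum-weight edge of parity $\beta$ joining $u$ and $v$. This clearly preserves the optimum and leaves at most $2^t\binom{n}{2}$ edges, and, crucially, guarantees that every entry
\[
t_{uv}(\tilde x,y,z) \;=\; \pm\!\!\sum_{e\in E_{uv}} \tilde x_e\, y^{\gamma(e)} z^{w(e)}
\]
of the Tutte matrix is a sum of at most $2^t$ distinct monomials in the $y_i$'s and $z$, has integer coefficients bounded by $\max_e \tilde x_e \le cn$, and has $z$-degree at most $w_{\max}$.

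With these three bounds in place I would apply Lemma~\ref{lem:timetocomputepfaffian} with $k=w_{\max}$. Since $t$ is fixed and $c$ depends only on $\epsilon$, the coefficient bound $cn$ is $\O(n)$, so the lemma outputs $\Pf(T(\tilde x,y,z))\in\widetilde R$ in time $\O(n^6 w_{\max}\log^2 n)$, the constant $c$ being absorbed into the $\O(\cdot)$. From that Pfaffian I would then read off the coefficient $p_\alpha(\tilde x,z)$ of $y^\alpha$ (one of at most $2^t$ coefficients) and scan it for its minimum $z$-exponent; this costs $\O(w_{\max}\log n)$ and is dominated by the Pfaffian step. Combining everything gives the claimed randomized $\O(w_{\max} n^6 \log^2 n)$ algorithm.

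The only real difficulty is verifying that the hypotheses of Lemma~\ref{lem:timetocomputepfaffian} actually hold for $T(\tilde x,y,z)$ after preprocessing -- specifically the ``at most $2^t$ terms per entry'' clause, which is what forces the preprocessing step and why parallel edges of equal parity must be collapsed to their minimum-weight representative. Once that bookkeeping is in place the remainder of the proof is a short assembly: Lemma~\ref{summary} for what the algorithm returns, Lemma~\ref{getdegree} for the single-trial success probability, and Lemma~\ref{lem:timetocomputepfaffian} for the running time of the Pfaffian computation.
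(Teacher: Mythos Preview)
Your proposal is correct and matches the paper's approach exactly: the paper's entire argument is the one-sentence preprocessing remark immediately preceding the theorem (reduce to at most one edge per parity class between each vertex pair), after which the Random Evaluation Algorithm together with Lemmas~\ref{summary}, \ref{getdegree}, and~\ref{lem:timetocomputepfaffian} yields the result. Your write-up is in fact more detailed than the paper's, which simply says ``This gives the following result.''
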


\section{Walks, cycles, and joins with parity constraints} 

Let $G$ be a graph, let $\gamma\in\GF(2)^{E(G)}$, and let $u,v\in V(G)$.
A {\em $(u,v)$-walk} in $G$ is a sequence
$(v_0,e_1,v_1,e_2,v_2,\ldots,v_{k-1},e_k,v_k)$ such that
\begin{itemize}
\item $v_0,v_1,\ldots,v_k$ are vertices with $v_0=u$ and $v_k=v$, and
\item for each $i\in\{1,\ldots,k\}$, the element $e_i$ is an edge with ends $\{v_{i-1},\, v_i\}$.
\end{itemize}
If $W=(v_0,e_1,v_1,e_2,v_2,\ldots,v_{k-1},e_k,v_k)$ is a walk, then the {\em length}
of $W$ is $k$ and the {\em parity} of $W$, denoted $\gamma(W)$, is
$\gamma(e_1)+\cdots+\gamma(e_k)$.
We denote by $E_{\odd}(W)$ the set of edges that occur an odd number 
of times in $W$; note that $\gamma(E_{\odd}(W))= \gamma(W)$ and $|E_{\odd}(W)|\le k$.

The first problem that we solve in this section is:

\medskip

{\it
\noindent
{\bf The $t$-Dimensional Parity Walk Problem}\nopagebreak \\[1mm]
{\sc Instance:}
A tuple $(G, \gamma, \alpha,u,v)$
where $G$ is a graph and $u$ and $v$ are vertices of $G$,
$\gamma: E(G)\rightarrow\GF(2)^{t}$, and $\alpha\in\GF(2)^t$.\\
{\sc Problem:}
Find a minimum length $(u,v)$-walk $W$ in $G$ of parity $\alpha$.
}
\medskip

We define a simple graph $G^\gamma$ as follows.
The vertex set of $G^\gamma$ is $V(G)\times \GF(2)^t$.
Vertices $(u,\beta)$ and $(v,\beta')$ are adjacent if and only if 
there is an edge $e=uv$ of $G$ with $\gamma(e)=\beta+\beta'$.
Note that $|V(G^\gamma)|= 2^t|V(G)|$ and $|E(G^\gamma)|\le 2^t|E(G)|$
(with equality unless there are two edges in $G$ with both the same ends and the same parity).
The following result is an easy consequence of this construction.
\begin{lemma}\label{lem:walks}
Let $(G, \gamma, \alpha,u,v)$ be an instance of the $t$-Dimensional Parity Walk Problem.
Then there is a $(u,v)$-walk in $G$ of length $k$ and parity $\alpha$
if and only if there is a $((u,0),(v,\alpha))$-walk in $G^\gamma$ of length $k$.
\end{lemma}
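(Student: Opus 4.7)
The plan is to establish a natural correspondence between $(u,v)$-walks in $G$ and $((u,0),(v,\alpha))$-walks in $G^\gamma$ that preserves length; the correspondence tracks the running parity of the walk as the second coordinate.

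For the forward direction, given a $(u,v)$-walk $W = (v_0,e_1,v_1,\ldots,e_k,v_k)$ in $G$ with $\gamma(W)=\alpha$, I would define partial parities $\beta_i = \gamma(e_1)+\cdots+\gamma(e_i)$ for $i \ge 1$ and $\beta_0 = 0$. The sequence of vertices $((v_0,\beta_0),(v_1,\beta_1),\ldots,(v_k,\beta_k))$ in $G^\gamma$ starts at $(u,0)$ and ends at $(v,\beta_k)=(v,\alpha)$. For each $i$, the edge $e_i$ has ends $v_{i-1}v_i$ and satisfies $\gamma(e_i) = \beta_{i-1}+\beta_i$, so by the definition of $G^\gamma$ the vertices $(v_{i-1},\beta_{i-1})$ and $(v_i,\beta_i)$ are adjacent. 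Thus we obtain a walk of length $k$ in $G^\gamma$.

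For the reverse direction, given a walk $((w_0,\beta_0),\ldots,(w_k,\beta_k))$ in $G^\gamma$ with $(w_0,\beta_0)=(u,0)$ and $(w_k,\beta_k)=(v,\alpha)$, the adjacency in $G^\gamma$ guarantees for each $i$ an edge $e_i$ of $G$ with ends $w_{i-1}w_i$ and $\gamma(e_i)=\beta_{i-1}+\beta_i$ (choose any such edge if several parallel ones exist in $G$). Then $(w_0,e_1,w_1,\ldots,e_k,w_k)$ is a $(u,v)$-walk of length $k$ in $G$ whose parity telescopes to $\sum_{i=1}^k(\beta_{i-1}+\beta_i) = \beta_0+\beta_k = \alpha$.

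There is no real obstacle here; the only point to note carefully is that $G$ may have parallel edges sharing the same parity (which collapse in $G^\gamma$), but this ambiguity in recovering $e_i$ is harmless since we only need existence of \emph{some} edge of the correct parity to preserve the walk's length. Both constructions are clearly length-preserving and invert each other up to this choice, which yields the desired equivalence.
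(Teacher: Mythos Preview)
Your argument is correct and is exactly the natural one: track the running parity as the second coordinate to lift walks in $G$ to $G^\gamma$, and project back by choosing witnessing edges. The paper itself does not give a proof, simply stating that the lemma ``is an easy consequence of this construction,'' so your write-up is precisely the omitted verification.
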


Now, by Lemma~\ref{lem:walks}, the $t$-Dimensional Parity Walk Problem
can be solved in linear time.

\subsection*{Cycles with parity constraints}
Next we consider the following problem.
\medskip

{\it
\noindent
{\bf The $t$-Dimensional Parity Cycle Problem}\nopagebreak \\[1mm]
{\sc Instance:}
A tuple $(G, \gamma, \alpha)$
where $G$ is a graph,
$\gamma: E(G)\rightarrow\GF(2)^{t}$, and $\alpha\in\GF(2)^t$.\\
{\sc Problem:}
Find a minimum cardinality cycle $C$ of $G$ with $\gamma(C)=\alpha$.
}
\medskip

Note that we do not require $C$ to be non-empty.

Let $(G, \gamma, \alpha)$ be an instance of the $t$-Dimensional Parity Cycle Problem.
A {\em closed walk} in a graph is a $(u,u)$-walk for any vertex $u$.
For each $\beta\in \GF(2)^t$, let $w(\beta)$ be the minimum length of a
closed walk of parity $\beta$; if there is no such walk we define
$w(\beta)=\infty$. Note that, if $G$ has $n$ vertices and $m$ edges, then
we can compute $w(\beta)$ in $\O(nm)$ time.
Now we define $\tilde w(\beta)$ to be the minimum of
$\sum(w(\alpha)\, : \, \alpha\in S)$ taken over all subsets
$S\subseteq \GF(2)^t$ with $\sum(\alpha\, :\, \alpha\in S) =\beta$.
Since we treat $t$ as constant, we can compute $\tilde w(\beta)$ in
$\O(nm)$ time.

\begin{lemma}\label{lem:cycles}
Let $(G, \gamma, \alpha)$ be an instance of the $t$-Dimensional Parity Cycle Problem.
Then the optimal value is $\tilde w(\alpha)$.
\end{lemma}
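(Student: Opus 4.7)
The plan is to prove the two inequalities separately; both are reasonably direct once we exploit the binary-matroid structure of cycles.

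For the upper bound $\mathrm{opt}(G,\gamma,\alpha)\le \tilde w(\alpha)$, I would take a subset $S\subseteq\GF(2)^t$ realizing $\tilde w(\alpha)$, so $\sum_{\beta\in S}\beta=\alpha$ and $\sum_{\beta\in S}w(\beta)=\tilde w(\alpha)$. For each $\beta\in S$, pick a closed walk $W_\beta$ of length $w(\beta)$ and parity $\beta$. The key observation is that for any closed walk $W$, $E_{\odd}(W)$ is a cycle of $G$ (every vertex is incident to an even number of edge-occurrences of $W$, so the number of incident edges with odd multiplicity is also even), has parity $\gamma(W)$, and satisfies $|E_{\odd}(W)|\le\mathrm{length}(W)$. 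Setting $C=\triangle_{\beta\in S}E_{\odd}(W_\beta)$ gives a cycle (cycles form a $\GF(2)$-subspace under symmetric difference) with $\gamma(C)=\sum_{\beta\in S}\beta=\alpha$ and $|C|\le\sum_{\beta\in S}|E_{\odd}(W_\beta)|\le\sum_{\beta\in S}w(\beta)=\tilde w(\alpha)$.

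For the lower bound $\mathrm{opt}(G,\gamma,\alpha)\ge\tilde w(\alpha)$, I would take an optimal cycle $C$ with $\gamma(C)=\alpha$ and decompose it into edge-disjoint circuits $C_1,\dots,C_k$ with parities $\beta_i=\gamma(C_i)$. Each circuit is (the edge set of) a closed walk, so $|C_i|\ge w(\beta_i)$. Summing, $|C|=\sum_i|C_i|\ge\sum_i w(\beta_i)$. To convert this multiset sum into a bound in terms of a subset $S\subseteq\GF(2)^t$, let $m(\beta)$ be the number of $i$ with $\beta_i=\beta$ and let $S=\{\beta:m(\beta)\text{ is odd}\}$. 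Then $\sum_{\beta\in S}\beta=\sum_i\beta_i=\gamma(C)=\alpha$, and since $m(\beta)\ge 1$ whenever $\beta\in S$ (and $w$ is nonnegative), $\sum_i w(\beta_i)=\sum_\beta m(\beta)w(\beta)\ge\sum_{\beta\in S}w(\beta)\ge\tilde w(\alpha)$. Combining, $|C|\ge\tilde w(\alpha)$, as required.

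The two bounds together yield the lemma. I do not anticipate any serious obstacle here; the only subtlety is the multiplicity-parity bookkeeping in the lower-bound direction, which rests on the trivial fact that $w(\beta)\ge 0$ and that an odd multiplicity is at least one. A small sanity check worth noting: $w(0)=0$ via the length-$0$ walk at any vertex, which is consistent with $C=\emptyset$ being allowed when $\alpha=0$.
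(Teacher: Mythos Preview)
Your proof is correct and follows essentially the same two-inequality approach as the paper: for $\mathrm{opt}\le\tilde w(\alpha)$ take symmetric differences of (edge-sets of) closed walks realizing the minimum, and for $\mathrm{opt}\ge\tilde w(\alpha)$ decompose an optimal cycle into circuits and bound each by $w(\beta_i)$. The paper's version is terser---it tacitly uses minimality of $C$ to ensure the $\beta_i$ are distinct (if $\beta_i=\beta_j$ then $C\setminus(C_i\cup C_j)$ is a smaller cycle of the same parity), whereas you handle possible repetitions explicitly via the multiplicity-parity bookkeeping; both are fine.
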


\begin{proof}
Let $C$ be a smallest cycle with $\gamma(C)=\alpha$.
Since the symmetric difference of two cycles is a cycle, $|C|\le\tilde w(\alpha)$.
On the other hand, consider a partition  $(C_1,\ldots,C_k)$ of $C$ into circuits.
For each $i\in \{1,\ldots,k\}$, let $\beta_i=\gamma(C_i)$;
note that $|C_i|\ge w(\beta_i)$.
So $|C|\ge w(\{\beta_1,\ldots,\beta_k\})\ge \tilde w(\alpha)$.
\end{proof}

By Lemma~\ref{lem:cycles}, we can solve the $t$-Dimensional Parity Cycle Problem
in $\O(nm)$ time.

\subsection*{Joins with parity constraints}
Let $G=(V,E)$ be a graph and let $T\subseteq V$. A {\em $T$-join} is a set $J\subseteq E$
such that $T$ is the set of odd-degree vertices of the subgraph $G[V,J]$.
Since graphs have an even number of vertices of odd degree, there do not exist
$T$-joins unless $|T|$ is even. Now consider the following problem.

\medskip

{\it
\noindent
{\bf The $t$-Dimensional Parity Join Problem}\nopagebreak \\[1mm]
{\sc Instance:}
A tuple $(G, T, \gamma, \alpha)$
where $G$ is a graph, $T\subseteq V(G)$,
$\gamma: E(G)\rightarrow\GF(2)^{t}$, and $\alpha\in\GF(2)^t$.\\
{\sc Problem:}
Find a minimum size $T$-join $J$ of $G$ with $\gamma(J)=\alpha$.
}
\medskip

Let $G$ be a graph, $T\subseteq V(G)$, and
$\gamma: E(G)\rightarrow\GF(2)^{t}$. For $\beta\in \GF(2)^t$,
we let $\tilde w_T(\beta)$ denote the 
minimum size of a $T$-join $J$ of $G$ with $\gamma(J)=\beta$;
if there is no $T$-join $J$ with $\gamma(J)=\beta$, then we let
$\tilde w_T(\beta)=\infty$.

Note that an $\emptyset$-join is a cycle, so $\tilde w_\emptyset(\beta) = 
\tilde w(\beta)$. Thus, when $T=\emptyset$, the 
$t$-Dimensional Parity Join Problem is just the
$t$-Dimensional Parity Cycle Problem, which is solved above.

We will start by considering instances $(G, T, \gamma, \alpha)$ with $|T|=2$.
For $u,v\in V(G)$ and $\beta\in \GF(2)^t$, we let $w_{uv}(\beta)$ denote
the minimum length of a $(u,v)$-walk of parity $\beta$;
we let $w_{uv}(\beta)=\infty$ if no such walk exists.
\begin{lemma}\label{lem:twoelements}
Let $(G, T,  \gamma, \alpha)$ be an instance of the $t$-Dimensional Parity Join Problem
with $T=\{u,v\}$.
Then $\tilde w_T(\alpha)$ is equal to the minimum of $w_{uv}(\beta) + \tilde w(\alpha+\beta)$
taken over all $\beta\in \GF(2)^t$. 
\end{lemma}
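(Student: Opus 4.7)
My plan is to prove both inequalities using the classical observation that a $\{u,v\}$-join decomposes as the edge-disjoint union of a $(u,v)$-trail and a cycle, together with the companion fact that the symmetric difference of such a trail with an arbitrary cycle is again a $\{u,v\}$-join. In particular, note that for any $(u,v)$-walk $W$, each internal visit contributes $2$ to a vertex's degree while the two endpoints each pick up one extra incidence, so $E_{\odd}(W)$ is itself a $\{u,v\}$-join with parity $\gamma(W)$.

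For the inequality $\tilde w_T(\alpha) \le \min_\beta\bigl(w_{uv}(\beta) + \tilde w(\alpha+\beta)\bigr)$, I would fix $\beta \in \GF(2)^t$ attaining the minimum, then let $W$ be a shortest $(u,v)$-walk of parity $\beta$ and $C$ be a smallest cycle of parity $\alpha+\beta$. Setting $J = E_{\odd}(W)\,\triangle\,C$, the observation above guarantees $J$ is a $\{u,v\}$-join, its parity is $\gamma(W) + \gamma(C) = \beta + (\alpha+\beta) = \alpha$, and its size satisfies $|J| \le |E_{\odd}(W)| + |C| \le w_{uv}(\beta) + \tilde w(\alpha+\beta)$.

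For the reverse inequality, I would take a minimum-size $\{u,v\}$-join $J$ with $\gamma(J) = \alpha$. Since each connected component of the subgraph $G[V(G),J]$ has an even number of odd-degree vertices, the vertices $u$ and $v$ must lie in a common component $H$, and $H$ admits an Eulerian $(u,v)$-trail $W$ (using every edge of $H$ exactly once). Letting $J' = J \setminus E(W)$, every vertex of $G$ now has even degree in $J'$, so $J'$ is a cycle. Writing $\beta = \gamma(W)$, we have $|W| \ge w_{uv}(\beta)$ since $W$ is a $(u,v)$-walk of parity $\beta$, and $|J'| \ge \tilde w(\alpha+\beta)$ since $J'$ is a cycle of parity $\alpha+\beta$, so $|J| = |W| + |J'| \ge w_{uv}(\beta) + \tilde w(\alpha+\beta)$, completing the proof. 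The only point requiring a little care is the walk-to-join degree calculation used in both directions, but this is a direct mod-$2$ bookkeeping argument, and the edge case where $u$ and $v$ lie in distinct components of $G$ is handled automatically since both sides are then $\infty$. I anticipate no real obstacle.
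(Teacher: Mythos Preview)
Your proof is correct and follows essentially the same two-inequality structure as the paper's proof. The only difference is in the reverse inequality: the paper simply picks a $(u,v)$-path $P$ in $G[V,J]$ and observes that $J-P$ is a cycle, whereas you take an Eulerian $(u,v)$-trail through the entire component of $J$ containing $u$ and $v$; both decompositions work, but the paper's path-based version is slightly more direct since it avoids invoking the Eulerian trail criterion.
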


\begin{proof}
If $W$ is a $(u,v)$-walk with $\gamma(W)=\beta$ and $C$ is a cycle with 
$\gamma(C) =\alpha+\beta$ then $E_{\odd}(W)\Delta C$ is a $\{u,v\}$-join
with $\gamma(E_{\odd}(W)\Delta C)= \alpha$. Thus
$$\tilde w_T(\alpha) \le w_{uv}(\beta) + \tilde w(\alpha+\beta).$$

Conversely, consider a $\{u,v\}$-join $J$ with $\gamma(J)=\alpha$ and
$|J| = \tilde w_T(\alpha)$. Since $u$ and $v$ are the only
odd-degree vertices in $G[V,J]$, there is a $(u,v)$-path in  $G[V,J]$;
let $P$ be the edge set of such a path and let $\beta=\gamma(P)$.
Now $|P|\ge w_{uv}(\beta)$ and, since $J-P$ is a cycle with $\gamma(J-P)=\alpha+\beta$,
we have $|J-P|\ge \tilde w(\alpha+\beta)$. Thus
$$\tilde w_T(\alpha) \ge w_{uv}(\beta) + \tilde w(\alpha+\beta);$$
which completes the proof.
\end{proof}

Then, as an immediate corollary, we get the following result.
\begin{theorem}\label{thm:twoelements}
There is an algorithm that,
given an instance $(G, T,  \gamma, \alpha)$ of the $t$-Dimensional Parity Join Problem
with $n$ vertices and $m$ edges and with $|T|\le 2$, finds the optimal value
in time $\O(mn)$.
\end{theorem}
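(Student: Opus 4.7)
The plan is to dispatch on $|T|$. If $|T|=1$, then since every graph has an even number of odd-degree vertices, no $T$-join exists, so the optimum is $\infty$ and we return immediately. If $|T|=0$, the problem is exactly the $t$-Dimensional Parity Cycle Problem, and by Lemma~\ref{lem:cycles} together with the remark preceding it, the value $\tilde w(\alpha)$ can be computed in $\O(nm)$ time. So the only substantive case is $|T|=2$, say $T=\{u,v\}$.

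For $|T|=2$, the key reduction is Lemma~\ref{lem:twoelements}, which tells us that
\[
\tilde w_T(\alpha) \;=\; \min_{\beta\in \GF(2)^t} \bigl( w_{uv}(\beta) + \tilde w(\alpha+\beta) \bigr).
\]
Since $t$ is a constant, this minimum is over $\O(1)$ values of $\beta$, so it suffices to compute, in $\O(nm)$ time, the two tables $(w_{uv}(\beta))_{\beta\in\GF(2)^t}$ and $(\tilde w(\gamma))_{\gamma\in\GF(2)^t}$.

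To compute $(w_{uv}(\beta))_{\beta\in \GF(2)^t}$, I would use Lemma~\ref{lem:walks}: form the auxiliary graph $G^{\gamma}$, whose size is $\O(nm)$ since $t$ is fixed, and run a single breadth-first search from the vertex $(u,0)$. The BFS levels then give $w_{uv}(\beta)$ as the distance from $(u,0)$ to $(v,\beta)$ for each $\beta$. This takes $\O(nm)$ time in total. To compute $(\tilde w(\gamma))_{\gamma\in \GF(2)^t}$, I would invoke exactly the construction described just before Lemma~\ref{lem:cycles}: for each $\beta\in \GF(2)^t$, compute $w(\beta)$ as the shortest closed walk of parity $\beta$ (again via BFS in $G^{\gamma}$, taking the minimum over all choices of basepoint, or equivalently finding shortest $(x,0)$-to-$(x,\beta)$ paths), and then take the constant-size min-convolution to obtain each $\tilde w(\gamma)$ in additional constant time per value. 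The total cost is again $\O(nm)$.

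Combining: after these two preprocessing steps, we evaluate the formula of Lemma~\ref{lem:twoelements} in constant time, yielding the optimal value in $\O(nm)$ time overall. The only real content is bookkeeping; there is no substantial obstacle, since every piece has already been established earlier in this section and the dependence on $t$ is absorbed into the constant because $t$ is treated as fixed.
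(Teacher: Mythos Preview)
Your argument is correct and follows exactly the approach the paper intends: the theorem is stated there as an ``immediate corollary'' of Lemma~\ref{lem:twoelements} together with the $\O(nm)$ solution of the $t$-Dimensional Parity Cycle Problem via Lemma~\ref{lem:cycles}, and you have simply spelled out the bookkeeping. One small slip: the auxiliary graph $G^{\gamma}$ has $2^t n$ vertices and at most $2^t m$ edges, so its size is $\O(n+m)$, not $\O(nm)$; this only strengthens your running-time claim.
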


We also need the following bound on $\tilde w_{\{u,v\}}(\beta)$.
\begin{lemma}\label{lem:size}
Let $(G, T,  \gamma, \alpha)$ be an instance of the $t$-Dimensional Parity Join Problem
with $|T|= 2$. If $\tilde w_T(\alpha)$ is finite, then $\tilde w_T(\alpha)\le 2^t |V(G)|$.
\end{lemma}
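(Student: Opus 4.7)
The plan is to bound the size of an optimum $\{u,v\}$-join $J$ of parity $\alpha$ by decomposing it along the connected components of $G$ and then applying the lifted-graph construction $G^{\gamma}$ of Lemma~\ref{lem:walks} to each piece. Since every connected component of a graph has an even number of odd-degree vertices and $u,v$ are the only odd-degree vertices of $(V(G),J)$, the two vertices $u$ and $v$ must lie in the same connected component $G_0$ of $G$. Let $G_1,\ldots,G_r$ denote the remaining components, and set $J_0=J\cap E(G_0)$ and $C_i=J\cap E(G_i)$ for $i\ge 1$. Then $J_0$ is a $\{u,v\}$-join of $G_0$ with parity $\gamma(J_0)$, and each $C_i$ is a cycle of $G_i$ (all vertex degrees are even, since $u,v\in V(G_0)$) with parity $\gamma(C_i)$; by the minimality of $J$, each piece is itself a minimum representative of its own parity class in its own component.

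Next I would bound each piece separately. For $J_0$: applying Lemma~\ref{lem:twoelements} inside $G_0$ with $\beta=\gamma(J_0)$ (so that $\tilde w^{G_0}(0)=0$, realised by the empty cycle) shows that $|J_0|$ is at most the minimum length of a $(u,v)$-walk in $G_0$ of parity $\gamma(J_0)$; by Lemma~\ref{lem:walks} this equals the length of a shortest $((u,0),(v,\gamma(J_0)))$-path in $G_0^{\gamma}$, a graph with only $2^t|V(G_0)|$ vertices, so $|J_0|\le 2^t|V(G_0)|-1$. For $C_i$: by Lemma~\ref{lem:cycles} with the singleton decomposition $S=\{\gamma(C_i)\}$, $|C_i|$ is at most the minimum length of a closed walk in $G_i$ of parity $\gamma(C_i)$, which by Lemma~\ref{lem:walks} equals a shortest path in $G_i^{\gamma}$ between two vertices of the form $(x,0)$ and $(x,\gamma(C_i))$; hence $|C_i|\le 2^t|V(G_i)|-1$. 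The one subtlety here is confirming that such a closed walk in $G_i$ actually exists: the cycle $C_i$ decomposes into circuits, and the connectedness of $G_i$ allows me to splice these circuits into a single closed walk of parity $\gamma(C_i)$ by inserting each traversal path in $G_i$ twice, so that it contributes nothing to the parity.

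Summing over the components that actually contribute then yields $|J|\le (2^t|V(G_0)|-1)+\sum_{i:C_i\neq\emptyset}(2^t|V(G_i)|-1)\le 2^t|V(G)|-1<2^t|V(G)|$, which is the required bound. I expect the main obstacle to be the per-component bound for the $C_i$: since the Section~5 quantities $\tilde w$ and $w$ need not coincide in general, some care is required to pass from the abstract minimum-cycle expression of Lemma~\ref{lem:cycles} to a concrete shortest-path estimate inside $G_i^{\gamma}$. Once both per-component bounds are in hand, the remaining arithmetic is immediate.
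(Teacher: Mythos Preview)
Your argument is correct, with one small point you left implicit: you verify that a closed walk of parity $\gamma(C_i)$ exists in $G_i$ via the splicing argument, but you should equally verify that a $(u,v)$-walk of parity $\gamma(J_0)$ exists in $G_0$ (otherwise $w_{uv}^{G_0}(\gamma(J_0))$ could be infinite and your bound on $|J_0|$ would be vacuous). The same splicing trick works, since $G_0$ is connected and $J_0$ decomposes into a $(u,v)$-path plus circuits.

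That said, your route is quite different from the paper's, and noticeably more elaborate. The paper does not decompose by components of $G$ and never touches $G^\gamma$. Instead it takes \emph{any} $T$-join $J$ with $\gamma(J)=\alpha$, partitions it into a $(u,v)$-path $P$ and edge-disjoint circuits $C_1,\ldots,C_k$, and then observes that one may discard any $C_i$ with $\gamma(C_i)=0$ and any pair $C_i,C_j$ with $\gamma(C_i)=\gamma(C_j)$ without changing the parity; after this pruning $k\le 2^t-1$, and since $|P|<|V(G)|$ and each $|C_i|\le |V(G)|$, the bound follows. This is a two-line pigeonhole argument on the $\gamma$-values of the circuits. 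Your approach instead leans on the minimality of $J$ and the machinery of Lemmas~\ref{lem:walks}, \ref{lem:cycles}, and \ref{lem:twoelements}; it buys you a slightly sharper constant (you get $2^t|V(G)|-1$) and handles disconnected $G$ explicitly, but at the cost of invoking the lifted-graph construction where a direct counting argument suffices.
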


\begin{proof}
Let $T=\{u,v\}$.
Let $J$ be a $T$-join with $\gamma(J) =\alpha$.
We can partition $J$ into sets $(P,C_1,\ldots,C_k)$
where $P$ is the edge set of a $(u,v)$-path and 
$C_1,\ldots,C_k$ are the edge sets of circuits.
We may assume that:
\begin{itemize}
\item for each $i\in\{1,\ldots,k\}$, we have $\gamma(C_i)\neq 0$ (since otherwise
$J-C_i$ is a $T$-join with $\gamma(J-C_i)=\alpha$), and
\item for each $1\le i<j\le k$,
we have $\gamma(C_i)\neq \gamma(C_j)$ (since otherwise
$J-C_i-C_j$ is a $T$-join with $\gamma(J-C_i-C_j)=\alpha$).
\end{itemize}
Thus $k\le 2^t-1$ and hence $|J|\le 2^t|V(G)|$.
\end{proof}

We now return to the general case.
Let $(G, T,  \gamma, \alpha)$ be an instance of the $t$-Dimensional Parity Join Problem.
Let $T^2$ denote the set of $2$-element subsets of $T$.
We define a graph $H(G,T)$ with vertex set $T$ and edge set
$T^2\times \GF(2)^t$ where the edge $(\{u,v\},\beta)$ has ends $u$ and $v$.
Given an edge $e=(\{u,v\},\beta)$ we define $\tilde w(e) = \tilde w_{\{u,v\}}(\beta)$
and $\tilde\gamma(e) =\beta$.
We can construct $H(G,T)$ and all of its associated ``edge weights'', $\tilde w(e)$
and $\tilde\gamma(e)$, in $\O(n^3m)$ time. 
\begin{lemma}\label{lem:joins}
Let $(G, T,  \gamma, \alpha)$ be an instance of the $t$-Dimensional Parity Join Problem
with $|T| > 0$ even.
Then $\tilde w_T(\alpha)$ is equal to the minimum of $\tilde w(M)$ 
taken over all perfect matchings $M$  of $H(G,T)$ with $\tilde \gamma(M)=\alpha$.
\end{lemma}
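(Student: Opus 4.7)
The plan is to prove $\tilde w_T(\alpha)$ equals the minimum matching weight via two inequalities.

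For the easy direction ($\tilde w_T(\alpha) \le \min_M \tilde w(M)$), I would take a perfect matching $M$ of $H(G,T)$ with $\tilde \gamma(M) = \alpha$, write it as $M = \{(\{u_i,v_i\},\beta_i) : i=1,\ldots,k\}$, and for each $i$ choose a $\{u_i,v_i\}$-join $J_i$ with $\gamma(J_i)=\beta_i$ and $|J_i|=\tilde w_{\{u_i,v_i\}}(\beta_i)$ (such a $J_i$ exists because $\tilde w(M)$ is finite). Since $\{u_1,v_1\},\ldots,\{u_k,v_k\}$ partitions $T$, the symmetric difference $J=J_1\triangle\cdots\triangle J_k$ is a $T$-join. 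The parity is additive under symmetric difference, so $\gamma(J)=\sum_i\beta_i=\tilde\gamma(M)=\alpha$, and $|J|\le\sum_i|J_i|=\tilde w(M)$, giving $\tilde w_T(\alpha)\le\tilde w(M)$.

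For the reverse inequality, I would take a minimum $T$-join $J$ with $\gamma(J)=\alpha$ and decompose it into $|T|/2$ edge-disjoint trails pairing up the vertices of $T$, plus edge-disjoint cycles. This follows from the standard Euler-type argument: augment $G[V,J]$ with a new vertex $r$ joined to every vertex of $T$; then every vertex has even degree, so each connected component has an Eulerian circuit, and cutting each circuit at its visits to $r$ produces $|T|/2$ trails $W_1,\ldots,W_m$ with endpoints pairing $T$, while components not containing $r$ give cycles $Z_1,\ldots,Z_\ell$. Let $u_i,v_i$ be the endpoints of $W_i$. I then ``absorb'' all cycles into the first trail by setting $J_1=E(W_1)\cup Z_1\cup\cdots\cup Z_\ell$ and $J_i=E(W_i)$ for $i\ge 2$; each $J_i$ remains a $\{u_i,v_i\}$-join because adding cycles preserves the parity of every vertex's degree. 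Setting $\beta_i=\gamma(J_i)$, the set of edges $(\{u_i,v_i\},\beta_i)$ is a perfect matching $M$ of $H(G,T)$, with $\tilde\gamma(M)=\sum_i\beta_i=\gamma(J)=\alpha$ and $\tilde w(M)=\sum_i\tilde w_{\{u_i,v_i\}}(\beta_i)\le\sum_i|J_i|=|J|=\tilde w_T(\alpha)$.

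The main technical point is the trail-and-cycle decomposition of $J$ and, in particular, ensuring cycles do not destroy the parity bookkeeping; absorbing them all into one join $J_1$ is the clean trick that handles this without losing the inequality on sizes. Everything else is a direct consequence of the fact that parity is additive under symmetric difference and that symmetric differences of $\{u_i,v_i\}$-joins over a partition of $T$ yield $T$-joins.
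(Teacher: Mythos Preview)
Your proof is correct and follows the same two-inequality structure as the paper's. The only difference is in how you decompose the minimum $T$-join $J$ into edge-disjoint $\{u_i,v_i\}$-joins: the paper does this by a short induction (pick $u\in T$, find another odd-degree vertex $v$ in the same component of $G[V,J]$, peel off a $(u,v)$-path, and recurse on what remains), whereas you use the Eulerian-circuit trick with an auxiliary vertex $r$. Both methods produce a partition $(J_1,\ldots,J_k)$ of $J$ into $T_i$-joins, after which the construction of the matching $M$ and the bookkeeping on $\tilde\gamma$ and $\tilde w$ are identical; the paper's induction is a bit lighter, while your Eulerian argument makes the pairing of $T$ more explicit.
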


\begin{proof}
Consider a $T$-join $J$ with $\gamma(J)=\alpha$ and $|J| = \tilde w_T(\alpha)$.
Let $k=\frac 1 2 |T|$.

\begin{claim}
There is a partition $(T_1,\ldots,T_k)$ of $T$ into two-element sets
and a partition $(J_1,\ldots,J_k)$ of $J$ such that, for each $i\in\{1,\ldots,k\}$,
the set $J_i$ is a $T_i$-join.
\end{claim}

\begin{proof}[Proof of claim]
The proof is by induction on $k$, and is trivial when $k = 1$; suppose that $k\ge 2$.
Let $u\in T$. The component of $G[V,J]$ that contains $u$ must 
contain another vertex of odd degree, say $v$. Since
$J$ is a $T$-join, $v\in T$. Let $J_1$ be the edge set of a $(u,v)$-path in 
$G[V,J]$ and let $T_1=\{u,v\}$. Thus $J_1$ is a $T_1$-join and $J-J_1$ is a $(T-T_1)$-join.
So the result follows by induction. 
\end{proof}

Let $(T_1,\ldots,T_k)$ and $(J_1,\ldots,J_k)$ be as in the claim.
Now, for each $i\in \{1,\ldots, k\}$, let $e_i =(T_i,\gamma(J_i))$; note that
$e_i$ is an edge of $H(G,T)$ and $|J_i|\ge \tilde w(e_i)$.
Let $M=\{e_1,\ldots,e_k\}$. Now $M$ is a perfect matching of $H(G,T)$
with $\tilde \gamma(M)=\gamma(J) = \alpha$ and
$\tilde w(M) \le |J| = \tilde w_T(\alpha)$.

Conversely, consider a matching $M$ of $H(G,T)$ with $\tilde \gamma(M)=\alpha$.
For each edge $e=(T',\beta)\in M$, let $J_e$ be a $T'$-join
with $\gamma(J_e)=\beta$ and $|J_e|= \tilde w(e)$.
Now let $J$ be the symmetric difference of the sets $(J_e\, : \, e\in M)$.
Then $J$ is a $T$-join, $\gamma(J) = \alpha$, and $|J|\le \tilde w(M)$. 
Hence $\tilde w_T(\alpha) \le \tilde w(M)$, as required.
\end{proof}

Lemma~\ref{lem:joins} enables us to reduce an instance 
$(G, T,  \gamma, \alpha)$ of the $t$-Dimensional Parity Join Problem
to an instance $(H(G,T),\tilde w,\tilde \gamma,\alpha)$ of 
the $t$-Dimensional Parity Perfect Matching Problem.
We will delete the edges $e$ of $H(G,T)$ with $\tilde w(e) = \infty$.
Then, by Lemma~\ref{lem:size}, $\tilde w_{\max}=\O(n)$.
So, by Theorem~\ref{thm:matching}, we get the following result.
\begin{theorem}\label{thm:joins}
Let $t$ be a non-negative integer and $\epsilon>0$.
There is a randomized algorithm that, given an instance of the 
$t$-Dimensional Parity Join Problem
with $n$ vertices, correctly solves the problem, with probability at least $1-\epsilon$,
in time $\O(n^7 \log^2 n)$.
\end{theorem}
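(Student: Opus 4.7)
The plan is to assemble Theorem~\ref{thm:joins} as a direct consequence of the reduction given by Lemma~\ref{lem:joins}, together with the weight bound of Lemma~\ref{lem:size} and the matching algorithm of Theorem~\ref{thm:matching}. First I would dispose of the degenerate cases. If $|T|$ is odd there is no $T$-join, so the instance is infeasible. If $|T|=0$, then an instance of the Parity Join Problem is exactly an instance of the Parity Cycle Problem, so Lemma~\ref{lem:cycles} gives a deterministic solution in time $\O(nm) \subseteq \O(n^7 \log^2 n)$. From now on assume $|T|\ge 2$ is even.

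Next I would construct the auxiliary weighted graph $H = H(G,T)$ along with the edge labels $\tilde w$ and $\tilde\gamma$. There are $\binom{|T|}{2}\cdot 2^t = \O(n^2)$ edges in $H$, and for each edge $e=(\{u,v\},\beta)$ the weight $\tilde w(e) = \tilde w_{\{u,v\}}(\beta)$ can be computed by a single invocation of the $|T|\le 2$ algorithm from Theorem~\ref{thm:twoelements}, which costs $\O(nm)$ per edge. Hence constructing the entire labelled graph $H$ takes $\O(n^3 m) \subseteq \O(n^5)$ time. By Lemma~\ref{lem:joins},
\[
\tilde w_T(\alpha) = \min\bigl\{\tilde w(M)\, :\, M\text{ is a perfect matching of }H,\ \tilde\gamma(M) = \alpha\bigr\},
\]
so $(H,\tilde w,\tilde\gamma,\alpha)$ is an instance of the $t$-Dimensional Parity Perfect Matching Problem whose optimal value equals $\tilde w_T(\alpha)$.

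I then delete from $H$ every edge $e$ with $\tilde w(e) = \infty$; the remaining edges all satisfy $\tilde w(e) \le 2^t |V(G)| = \O(n)$ by Lemma~\ref{lem:size}. Thus the matching instance has at most $n$ vertices and maximum edge weight $\tilde w_{\max} = \O(n)$. Applying the randomized algorithm of Theorem~\ref{thm:matching} with error parameter $\epsilon$ yields, with probability at least $1-\epsilon$, the minimum-weight perfect matching of $H$ of parity $\alpha$, in time
\[
\O\!\bigl(\tilde w_{\max}\, n^6 \log^2 n\bigr) = \O(n^7 \log^2 n).
\]
We return this value as $\tilde w_T(\alpha)$.

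There is essentially no obstacle beyond careful bookkeeping: the reduction and the weight bound are already in hand, and the total running time is dominated by the matching step. The only point that deserves a sentence is the correctness-probability accounting, which is immediate because we make just one call to the randomized subroutine of Theorem~\ref{thm:matching}; all other steps (constructing $H$, computing $\tilde w$ and $\tilde \gamma$ via Theorem~\ref{thm:twoelements}, and handling the cases $|T|\in\{0,1\}$) are deterministic. Combining these bounds completes the proof.
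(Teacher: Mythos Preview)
Your proposal is correct and follows essentially the same route as the paper: reduce via Lemma~\ref{lem:joins} to an instance of the $t$-Dimensional Parity Perfect Matching Problem on $H(G,T)$, drop infinite-weight edges, invoke Lemma~\ref{lem:size} to get $\tilde w_{\max}=\O(n)$, and apply Theorem~\ref{thm:matching}. The only small gap is that you write $\O(n^3 m)\subseteq\O(n^5)$ without justifying $m=\O(n^2)$; this is easily fixed by noting that one may first discard, for each pair of endpoints and each parity in $\GF(2)^t$, all but one parallel edge, so $m\le 2^t\binom{n}{2}=\O(n^2)$ --- the paper makes the same implicit assumption.
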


\section{Computing girth}

This brings us to the final step.
\begin{theorem}\label{coupdegrace}
Let $t$ be a non-negative integer and $\epsilon>0$.
There is a randomized algorithm that, given a $(t,1)$-signed-graft
$(G,S,T,B,C,D)$ with $n$ vertices and $m$ edges, correctly 
computes the girth of the matroid $M(G,S,T,B,C,D)\con T$, with probability at least $1-\epsilon$,
in time $O(n^7 \log^2 n+ mn)$.
\end{theorem}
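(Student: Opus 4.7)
Write $T=\{\tau\}$. For $e\in E(G)$ let $\gamma(e)\in\GF(2)^S$ be the column of $C$ at $e$, and, viewing the single column $B$ as a vector in $\GF(2)^{V(G)}$, let $T_B=\{v:B_v=1\}$. Expanding the cycle condition for the incidence matrix $A$ of the signed-graft and splitting on whether $\tau$ lies in the cycle, one sees that a non-empty subset $W\subseteq E(G)$ is a cycle of $M(A)\con T$ if and only if either
\emph{(a)} $W$ is a cycle of $G$ with $\gamma(W)=0$, or
\emph{(b)} $W$ is a $T_B$-join of $G$ with $\gamma(W)=D$.
The girth is the minimum of the two case-optima, so the plan is to solve each case separately and take the minimum.

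Case~(b) is precisely an instance $(G,T_B,\gamma,D)$ of the $t$-Dimensional Parity Join Problem, which Theorem~\ref{thm:joins} solves with probability at least $1-\epsilon$ in $\O(n^7\log^2 n)$ time. The only degenerate subcase is $T_B=\emptyset$ and $D=0$, in which the only candidate $W=\emptyset$ is detected and discarded.

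Case~(a) requires a minimum \emph{non-empty} $\gamma$-parity-$0$ cycle of $G$, and the non-emptiness constraint does not fit directly into the Parity Cycle Problem of Section~6 since the empty set itself has parity $0$. The fix is to fix a ``witness edge'' and enumerate: for each non-loop edge $e=uv\in E(G)$ apply Theorem~\ref{thm:twoelements} to the instance $(G-e,\{u,v\},\gamma,\gamma(e))$ to obtain, deterministically in $\O(mn)$ time, the minimum size $\mu_e$ of a $\{u,v\}$-join of $G-e$ with $\gamma$-parity $\gamma(e)$; return $\min_{e}(1+\mu_e)$, also comparing against the trivial candidate $1$ if any loop of $\gamma$-value $0$ exists. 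Correctness is a one-line bijection: $W\mapsto W\setminus\{e\}$ maps non-empty $\gamma$-parity-$0$ cycles of $G$ containing $e$ onto $\{u,v\}$-joins of $G-e$ with $\gamma$-parity $\gamma(e)$, shifting sizes by one, with inverse $J\mapsto J\cup\{e\}$.

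As a preliminary step I would collapse each class of parallel edges sharing an endpoint pair and a $\gamma$-value to a single representative (two such parallel edges contribute $0$ to every degree parity and every $\gamma$-parity, so the reduction preserves the optima of both cases), giving $m'\le 2^t\binom{n}{2}=\O(n^2)$ edges in $\O(m+n^2)\subseteq\O(mn)$ time. Case~(a) then costs $\O(m'^2 n)=\O(n^5)$ on the reduced graph and case~(b) costs $\O(n^7\log^2 n)$, summing to the required $\O(n^7\log^2 n+mn)$. The only randomized subroutine is the single call to Theorem~\ref{thm:joins}, so the overall error stays at $\epsilon$. The main delicate point is the non-emptiness constraint in case~(a), which prevents a direct single appeal to the Parity Cycle Problem and forces the $\O(m)$-way edge-enumeration; once that is accepted, both the time and the correctness analyses are routine combinations of the tools already developed.
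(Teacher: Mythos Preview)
Your overall strategy is exactly the paper's: split on whether the element $\tau\in T$ is in the lifting cycle, solve the ``with $\tau$'' side as a single $t$-Dimensional Parity Join instance via Theorem~\ref{thm:joins}, and solve the ``without $\tau$'' side by fixing an edge and reducing to $|T'|\le 2$ Parity Join instances via Theorem~\ref{thm:twoelements}. The reduction to $m'=\O(n^2)$ edges before the $\O(m^2n)$ loop is also what the paper does.

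There are, however, two small gaps in your handling of degenerate cases in part~(a). First, your parallel-edge collapse is not quite optimum-preserving: if $e,f$ are parallel with $\gamma(e)=\gamma(f)$, then $\{e,f\}$ is itself a non-empty $\gamma$-parity-$0$ cycle of size $2$, and collapsing the class to one representative destroys it; your justification ``two such parallel edges contribute $0$ to every degree parity and every $\gamma$-parity'' shows only that $W\setminus\{e,f\}$ is again a feasible cycle, which fails when $W=\{e,f\}$. The paper avoids this by first testing $M(A)\con T$ for loops and parallel pairs explicitly before assuming simplicity. Second, your case-(a) enumeration ranges only over \emph{non-loop} edges (plus the single-loop check for $\gamma=0$), so it misses optimal cycles consisting entirely of loops with individually nonzero $\gamma$-values summing to $0$; the paper's claim also iterates over loops $f$, taking $T'=\emptyset$ and invoking Theorem~\ref{thm:twoelements} (which allows $|T'|=0$) on $(G-f,\emptyset,\gamma,\gamma(f))$. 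Both fixes are one-line additions and leave the running time unchanged.
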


\begin{proof}
In $\O(mn)$ time we can check whether
$M(G,S,T,B,C,D) / T$ has a loop. If not, but $m > 2^t\binom{n}{2}$, then its girth is two, so
we may assume that $m = \O(n^2)$. In $\O(n^5)$ time we can check if $M(G,S,T,B,C,D) / T$ has any parallel pairs, so
we may assume that it is simple.

Suppose that $T=\{e\}$.
Let $k_1$ be the girth of $M(G,S,T,B,C,D)\del T$ and let 
$k_2+1$ be the size of the smallest cycle of $M(G,S,T,B,C,D)$ that 
contains $e$. If $k_2=0$, then the girth of  $M(G,S,T,B,C,D)\con T$
is $k_1$ and, if $k_2>0$, then the girth of $M(G,S,T,B,C,D)\con T$
is $\min(k_1,k_2)$. In either case, it suffices to compute $k_1$ and $k_2$.

We may assume that $S=\{1,\ldots,t\}$. For each edge $f$ of $G$
we let $\gamma(f)$ denote the column of $C$ indexed by $f$.
Let $\alpha$ be the unique column of $D$, and 
let $\widetilde T\subseteq V(G)$ be the set whose characteristic vector is 
the unique column of $B$.

\begin{claim} 
We can compute $k_1$ in $\O(m^2 n)$ time.
\end{claim}

\begin{proof}[Proof of Claim]
Let $f$ be an edge of $G$, and let $\beta = \gamma(f)$. If $f$ is not a loop,
then let $T'$ be the set of ends of $e$. If $f$ is a loop, 
let $T'=\emptyset$. For $J\subseteq E(G)-\{f\}$, the set
$J\cup \{f\}$ is a cycle of $M(G,S,T,B,C,D)\del T$ if and only if
$J$ is a $T'$-join in $G-f$ with $\gamma(J)=\beta$.
So, by Lemma~\ref{thm:twoelements}, we can 
compute the size of the shortest cycle in $M(G,S,T,B,C,D)\del T$
that contains $f$ in $\O(mn)$ time. Thus we can compute
the girth of $M(G,S,T,B,C,D)\del T$ by repeating this for each edge of $G$.
\end{proof}

For $J\subseteq E(G)$, the set $J\cup \{e\}$ is a cycle of
$M(G,S,T,B,C,D)$ if and only if $J$ is a $\widetilde T$-join
in $G$ with $\gamma(J)=\alpha$. Then,
by Theorem~\ref{thm:joins}, there is a randomized algorithm
that will correctly compute $k_2$, with probability at least $1-\epsilon$,
in $\O(n^7\log^2 n)$ time.
\end{proof}

Note that Theorem~\ref{thm:main1} follows from Lemmas~\ref{lem:decomposition} and~\ref{lem:reducet}
and Theorem~\ref{coupdegrace}.

\section*{Acknowledgement}
We thank Bert Gerards and Geoff Whittle, with whom we have enjoyed many fruitful discussions on this topic. We also thank the referee for their suggestions.

\appendix

\section{The construction of $H_D$}

Let $D=(d_{ij})$ be an $n$ by $n$ skew-symmetric matrix with entries
in a ring $R$. We may assume that the rows and columns are indexed
by $\{1,\ldots,n\}$. If $n$ is odd, the the Pfaffian is zero, so we may 
assume that $n$ is even. Let
$$ X=\{0,1\}\times\{1,\ldots,n\}\times\{1,\ldots,n\}\times\{0,\ldots,n-1\}.$$
Now let $H_D = (V,E)$ be the edge-weighted directed 
graph where $ V=\{s,t^-,t^+\}\cup X$ 
and $E$ consists of the following edges:
\begin{enumerate}
\item For each $a\in \{1,3,\ldots,n-1\}$, there is an edge
$e=sv$ where $v=(0,a,a,0)$ with $\wt(e)=1$.
\item For each $v\in X$  with $v_4\in\{2,4,\ldots,n\}$ and $v_3>v_2$, and for each $a\in\{1,\ldots, n\}-\{v_3\}$,
if $a<v_3$ there 
is an edge $e=uv$ where $u=(1-v_1, v_2,a,v_4-1)$ with $\wt(e) = d_{u_4,v_4},$ and
if $a>v_3$ there
is an edge $e=uv$ where $u=(v_1,v_2,a,v_4-1)$ with $\wt(e) = d_{v_4,u_4}.$
\item For each $v\in X$ with $v_3$ and $v_4$ both even and with $v_2< v_3-1$, there 
is an edge $e=uv$ where $u=(1-v_1,v_2,v_3-1,v_4-1)$ with $\wt(e) = 1.$
\item For each $v\in X$ with $v_3$ odd, $v_4$ even and with $v_2< v_3$, there
$u_1=v_1$, $u_2=v_2$, $u_3=v_3+1$, and $u_4=v_4-1$, there 
is an edge $e=uv$ where $u=(v_1,v_2,v_3+1,v_4-1)$ with $\wt(e) = 1.$
\item For each $v\in X$ with $v_3$ odd, $v_4$ even, and $v_3=v_2$, and for each $a\in\{1,\ldots, v_2-1\}$ there
is an edge $e=uv$ where $u=(1-v_1, a, a+1, v_4-1)$ with $\wt(e) = 1.$
\item For each $a\in\{1,3,\ldots,n-1\}$ there
is an edge $e=ut^{-}$ where $u=(0,a,a+1,n-1)$ with $\wt(e) = 1.$
\item For each $a\in\{1,3,\ldots n-1\}$ there
is an edge $e=ut^{+}$ where $u=(1,a,a+1,n-1)$ with $\wt(e) = 1.$
\end{enumerate}
 
\begin{lemma} The directed graph $H_D$ satisfies the following:
\begin{itemize}
\item[(H1)] $H_D$ is an acyclic digraph with $2n^3+3$ vertices
\item[(H2)] each vertex in $H_D$ has in-degree at most $n$, 
\item[(H3)] each directed path in $H_D$ has at most $n+1$ edges, and
\item[(H4)] the weight of each edge is either equal to an entry of $D$ or to one.
\end{itemize}
\end{lemma}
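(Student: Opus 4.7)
The plan is to verify each of (H1)--(H4) by direct inspection of the seven edge families defining $H_D$. All four properties reduce to routine case analyses, and the only real subtlety will be that the parity conditions appearing in families (2)--(5) must be combined carefully to control the in-degree.

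For (H1), the vertex count is immediate: $|X|=2\cdot n\cdot n\cdot n = 2n^3$, so $|V(H_D)|=2n^3+3$. For acyclicity, I would observe that along every edge of families (2)--(5) the source $u$ and target $v$ satisfy $u_4=v_4-1$, so the coordinate $v_4$ strictly increases across such edges; meanwhile edges of family (1) leave $s$ and enter $X$, while edges of families (6)--(7) leave $X$ and enter $\{t^-,t^+\}$. Placing $s$ first, then the vertices of $X$ in increasing order of $v_4$, and finally $t^-,t^+$, is therefore a topological ordering of $H_D$, establishing acyclicity. The same observation yields (H3) at once: a directed path uses at most one family-(1) edge at its start and at most one edge of family (6) or (7) at its end, and its interior edges strictly increase $v_4$ through $\{0,\dots,n-1\}$, contributing at most $n-1$ further edges, for a total of at most $n+1$.

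For (H2) I would case-split on the target vertex. The vertex $s$ has in-degree zero and $t^+,t^-$ each receive exactly $n/2$ edges from families (6) and (7). A vertex $v\in X$ with $v_4=0$ can be targeted only by family (1), giving in-degree at most one. For $v\in X$ with $v_4\ge 1$, every applicable family among (2)--(5) forces $v_4$ to be even, and the contributions are nearly mutually exclusive. The tight case is when $v_3>v_2$ and $v_4$ is even: family (2) contributes $n-1$ edges, one per choice of $a\in\{1,\dots,n\}\setminus\{v_3\}$, while exactly one of families (3) or (4) contributes a further single edge depending on the parity of $v_3$, and family (5) cannot contribute because it requires $v_3=v_2$; the total is therefore at most $n$. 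In the remaining case $v_3=v_2$ (with $v_3$ odd and $v_4$ even) only family (5) applies, contributing at most $v_2-1\le n-1$ edges. Property (H4) is then a one-line check: families (1) and (3)--(7) prescribe weight $1$, while family (2) prescribes $d_{u_4,v_4}$ or $d_{v_4,u_4}$, both entries of $D$. The main obstacle is purely the bookkeeping in (H2); the crucial small observation that makes the bound exactly $n$, rather than something like $2n-2$, is the incompatibility between the condition $v_3>v_2$ in family (2) and the condition $v_3=v_2$ in family (5), which prevents the two ``large'' edge families from ever contributing to a common target vertex.
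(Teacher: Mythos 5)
Your proof is correct, and for (H1), (H3) and (H4) it is essentially the paper's argument: the fourth coordinate induces a layering $s$, then the vertices of $X$ with $v_4=0,1,\ldots,n-1$, then $\{t^-,t^+\}$, in which every edge advances exactly one layer, giving acyclicity, the vertex count $2n^3+3$, and the bound $n+1$ on path length. Where you genuinely diverge is (H2). The paper argues that at most $n$ edges of any single type enter a given vertex \emph{and} that at most one type of edge can enter a given vertex, offering the condition $v_4\in\{2,4,\ldots,n\}$ of family (2) as what excludes all other families. For the construction as written that exclusivity claim is not literally true: a vertex such as $v=(0,1,4,2)$ satisfies the hypotheses of family (2) ($v_4$ even and at least $2$, $v_3>v_2$) and also of family (3) ($v_3,v_4$ even, $v_2<v_3-1$), so it receives edges of both types. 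Your bookkeeping --- family (2) contributes exactly $n-1$ edges, one per $a\in\{1,\ldots,n\}\setminus\{v_3\}$, at most one further edge comes from (3) or (4), family (5) is ruled out by $v_3>v_2$ versus $v_3=v_2$, and the terminals $t^{\pm}$ receive only $n/2$ edges --- is the accurate account of the stated construction, and it still yields the bound $n$, which your example shows is tight; so your route buys a correct and sharp count where the paper's shortcut overstates the disjointness of the edge families. Two trivial loose ends you could tidy: for $v_4=0$ you are implicitly using that the would-be source of a (3)--(5) edge would have fourth coordinate $-1$ and hence is not a vertex of $H_D$, and ``exactly one of (3) or (4) contributes'' should read ``at most one'' (when $v_3$ is even and $v_2=v_3-1$ neither applies); neither affects the bound.
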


\begin{proof} Note that, by definition, 
$H_D$ has $2n+3$ vertices and (H4) holds.
Let $X_0=\{s\}$, $X_{n_1}=\{t^-,t^+\}$, and, for each $i\in\{0,\ldots,n\}$,
let
$$ X_{i} =\{0,1\}\times\{1,\ldots,n\}\times\{1,\ldots,n\}\times\{i-1\}.$$
For each edge $e=uv$ there exists $i\in \{0,\ldots,n\}$ such that
$u\in X_i$ and $v\in X_{i+1}$. It follows that
$H_D$ is acyclic and has no directed path with more than $n+1$ edges.
 Thus (H1) and (H3) hold. 
 
Consider a vertex $v$ of $H_D$. Each edge $e=uv$ entering
$v$ is of one of the types $(1),\ldots,(7)$. Note that there 
are at most $n$ edges of any given type entering $v$ and that
there is at most one type of edge entering $v$. For example,
if there is an edge $e=uv$ of type (2) entering $v$ then
$v\in X$ and  $v_4\in\{2,4,\ldots,n\}$, which precludes the existence of other types
of edges entering $v$. Thus $v$ has in-degree at most $n$.
\end{proof}

\end{document}